\numberwithin{equation}{section}
\numberwithin{figure}{section}
\theoremstyle{plain}
\newtheorem{thm}{\protect\theoremname}[section]
  \theoremstyle{remark}
  \newtheorem{rem}[thm]{\protect\remarkname}
  \theoremstyle{plain}
  \newtheorem{lem}[thm]{\protect\lemmaname}
  \theoremstyle{definition}
\newcommand{\Rmnum}[1]{\expandafter\@slowromancap\romannumeral#1@}\makeatother
\numberwithin{equation}{section}
\newcommand{\set}[1]{\left\{#1\right\}}
\newcommand{\defs}{:=}
\DeclareMathOperator*{\dist}{dist}
\newcommand{\dif}{\mathrm{d}}
\DeclareSymbolFont{lettersA}{U}{pxmia}{m}{it}
\DeclareMathSymbol{\piup}{\mathord}{lettersA}{"19}
\newcommand{\mcc}{\mathcal{C}}
  \providecommand{\definitionname}{Definition}
  \providecommand{\lemmaname}{Lemma}
  \providecommand{\remarkname}{Remark}
\providecommand{\theoremname}{Theorem}
\begin{document}

\title[Uniqueness of transonic shock Solutions in an Expanding nozzle]{ Uniqueness of Transonic Shock Solutions for Two-Dimensional Steady Compressible Euler Flows in an Expanding Nozzle}

\author{Beixiang Fang}

\author{Xin Gao}

\author{Wei Xiang}

\address{B.X. Fang: School of Mathematical Sciences, MOE-LSC, and SHL-MAC, Shanghai
Jiao Tong University, Shanghai 200240, China }

\email{\texttt{bxfang@sjtu.edu.cn}}

%

\address{X. Gao: Department of Applied Mathematics, The Hong Kong Polytechnic University, Hong Kong, China}
\email{\texttt{xingao@polyu.edu.hk}}
%

\address{W. Xiang: Department of Mathematics, City University of Hong Kong, Kowloon, Hong Kong, China}

\email{\texttt{weixiang@cityu.edu.hk }}

\keywords{2-D; steady Euler system; transonic shocks; receiver pressure; uniqueness;}
\subjclass[2010]{35A01, 35A02, 35B20, 35B35, 35B65, 35J56, 35L65, 35L67, 35M30, 35M32, 35Q31, 35R35, 76L05, 76N10}

\date{\today}


 \email{}
 \email{}
 \email{}
\begin{abstract}
In this paper, we are trying to show the uniqueness of transonic shock solutions in an expanding nozzle under certain conditions and assumptions on the boundary data and the shock solution. The idea is to compare two transonic shock solutions and show that they should coincide if the perturbation of the nozzle is
sufficiently small. To this end, a condition on the pressure of the flow across the shock front is proposed, such that a priori estimates for the subsonic flow behind the shock front could be established without the assumption that it is a small perturbation of the unperturbed uniform subsonic state. With the help of these estimates, the uniqueness of the position of the intersection point between the shock front and the nozzle boundary could be further established by demonstrating the monotonicity of the solvability condition for the elliptic sub-problem of the subsonic flow behind the shock front. Then, via contraction arguments, two transonic shock solutions could be verified to coincide as the perturbation is small, which leads to the uniqueness of the transonic shock solution.
\end{abstract}

\maketitle
\section{Introduction and main results}
In this paper, we are concerned with the uniqueness of transonic shocks for steady Euler flows in a finite expanding two-dimensional (2-d) nozzle.
If the nozzle is an expanding angular sector or a diverging cone, under the assumption that the flow parameters only depend on the radius, Courant-Friedrichs established in \cite{CR} the unique existence of the transonic shock solutions if the value of the receiver pressure lies in a certain interval. These transonic shock solutions have been shown to be structurally stable for generic small perturbations of boundary data and the geometry of the nozzle, for instance, by Chen in \cite{SC2009} and by Li-Xin-Yin in \cite{LXY2009,LiXY2009,LiXY2010,LiXinYin2011PJM,LXY2013}. Recently, Fang-Xin in \cite{FB63} showed that, with the same prescribed receiver pressure at the exit, there may exist more than one shock solutions for steady Euler flows in a general nozzle with both expanding and contracting portions, which yields the non-uniqueness of the transonic shock solutions for general cases.
However, if the nozzle is strictly expanding, only one shock solution has been established via the methods developed in \cite{FB63}.
Thus, one may wonder whether it is possible to establish the uniqueness of the transonic shock solutions for steady Euler flows in a strictly expanding nozzle with the given pressure at the exit. In this paper, we are going to show that this uniqueness can be true under certain conditions and assumptions on the boundary data.

\subsection{The transonic shock problem in a nozzle with a receiver pressure condition at the exit}
Let the nozzle be bounded in
$$\mathcal{D} \defs \{(x_1,x_2 )\in {\mathbb{R}}^2: 0 < x_1 < L,\  0 < x_2  < \phi(x_1)\}$$
 with the entrance $E_0$, the exit $E_L$ as well as the walls $W_2$ and $W_4$ (see Figure \ref{fig:1}), \emph{i.e.}
\begin{align}
  E_0 &\defs \{(x_1,x_2)\in {\mathbb{R}}^2 : x_1 = 0,\ 0<x_2< 1\},\label{E0EQ}\\
 W_2 &\defs \{(x_1,x_2)\in {\mathbb{R}}^2 : 0< x_1 < L,\ x_2 =0 \},\label{W2EQ} \\
  E_L&\defs \{(x_1 ,x_2)\in {\mathbb{R}}^2 : x_1 =L, \ 0<x_2 <\phi(L)\},\label{ELEQ}\\
 W_4&\defs \{(x_1 ,x_2)\in {\mathbb{R}}^2 : 0< x_1 <L,\ x_2 =\phi(x_1)\},\label{W4EQ}
\end{align}
where $\phi(x_1)\in \mathcal{C}^{2,\alpha}[0,L]$ is a given function. In addition,  for convenience of calculations, we assume that the lower bound of the nozzle is flat (\emph{i.e.}\eqref{W2EQ}). If the lower bound of the nozzle is a perturbation of a flat one, it can be treated similarly.
\begin{figure}[!h]
\centering
\includegraphics[width=0.5\textwidth]{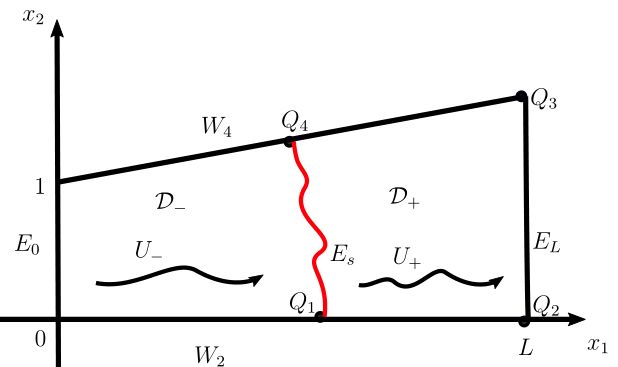}
\caption{The steady Euler flow with a transonic shock front in an expanding nozzle.\label{fig:1}}
\vspace{-0.2cm}
\end{figure}

Assume that the flow in the nozzle $\mathcal{D}$ is governed by the following 2-d steady Euler equations:
\begin{align}
&\partial_{x_1} (\rho u) + \partial_{x_2} (\rho v)=0,\label{eq1}\\
&\partial_{x_1} (\rho u v ) + \partial_{x_2} (\rho v^2 + p) = 0,\\
&\partial_{x_1} (\rho u^2 + p) + \partial_{x_2} (\rho u v)=0,\\
&\partial_{x_1} (\rho u \mathrm{B}) + \partial_{x_2} (\rho v \mathrm{B}) = 0\label{eq4},
\end{align}
where $u$ and $v$ represent the horizontal velocity and vertical velocity respectively, $\rho$ is the density, $p$ is the pressure, $\mathrm{B} = \frac12(u^2 + v^2) + i$ is the Bernoulli constant, with $i =e+\frac{p}{\rho}$ the enthalpy and $e$ the internal energy.
Moreover, the fluid is supposed to be a polytropic gas with the state equation
$p=A(s){\rho}^{\gamma}$,
where $s$ is the entropy, $\gamma>1$ is the adiabatic exponent, and $A(s)=R e^{\frac{s}{c_v}}$ with $c_v$ being the specific heat at constant volume and $R$ is a positive constant.
Let $U = (p,\theta, q, s)^\top$ be the state of the fluid, where $\theta = \arctan\displaystyle\frac{v}{u}$ is the flow angle and $q= \sqrt{u^2 + v^2}$ is the speed. Moreover, let $c = \sqrt{\displaystyle\frac{\gamma p}{\rho}}$ be the sonic speed, and let $M = \displaystyle\frac{q}{c}$ be the Mach number. When $M < 1$
(respectively, $M > 1$), the flow is called a subsonic flow (respectively a supersonic
flow).
In this paper, we are concerned with the flow pattern involving a single transonic shock in the nozzle. Let $E_s \defs\set{x_1 = \varphi(x_2)}$ be the position of the shock front (see Figure \ref{fig:1}), then the following Rankine-Hugoniot conditions (which we abbreviate as R-H conditions hereafter) should hold on $E_s$:
\begin{align}
&[\rho u]-\varphi'[\rho v]=0,\label{5}\\
&[\rho u v]- \varphi'[p+\rho v^2]=0,\label{6}\\
&[p+\rho u^2]- \varphi'[\rho u v]=0,\label{7}\\
&[\mathrm{B}]=0.\label{8}
\end{align}

\begin{rem}\label{shockremark}

For a fixed point $Q = (\varphi(x_2), x_2)$ on the shock front $E_s$,
let $U_-(Q) = (p_-(Q), u_-(Q), v_-(Q), \rho_-(Q))$ be the supersonic state of the flow ahead of $E_s$ at the point $Q$, then in \cite[p.306-p.309]{CR},
it has been shown that the R-H conditions \eqref{5}-\eqref{8} yield that all possible states $U(Q)$ behind the shock front $E_s$ at $Q$ form a curve in the phase space, which is called shock polar. (See Figure \ref{fig:2} for projection of the shock polar on $\theta-p$ plane.)
In particular, one has the following important relation on $\theta$ and $p$ (cf.\cite[p.347]{CR}):
\begin{align}\label{op}
  \tan(\theta - \theta_-) = \pm \frac{\frac{p}{p_-} -1}{\gamma M_-^2 - (\frac{p}{p_-} -1)}\sqrt{{\frac{\frac{2\gamma}{\gamma+1}(M_-^2 - 1) - (\frac{p}{p_-} -1)}{\frac{p}{p_-} + \frac{\gamma -1}{\gamma +1}}}},
\end{align}
where $c_- =  \sqrt{\frac{\gamma p_-}{\rho_-}}$, $M_- =\frac{q_-}{c_-}$ is the Mach number of the flow ahead of $E_s$ at $Q$.

\begin{figure}[!h]
\centering
\includegraphics[width=0.4\textwidth]{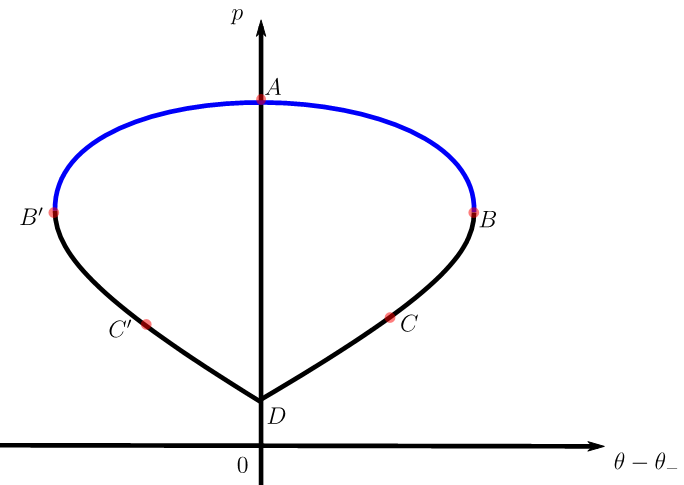}
\caption{The shock polar on $\theta$-$p$ plane.\label{fig:2}}
\end{figure}
There are several important points on the $\theta-p$ shock polar:
$A$ is the point where $p$ reaches its maximum, $B= (\theta_*+\theta_-, p_*)$ is the critical point where $ p_*$ is the critical pressure and  $\theta$ reaches its maximum.
The point $C = (\theta_{sonic}+\theta_-, p_{sonic})$ has the special property that the state
of the gas behind the shock-front is exactly sonic, that is, $M = 1$. For all points on the shock polar with $p > p_{sonic}$ (that is, on the open arc $ \widehat{CAC'}$), the states behind
the shock front are all subsonic. The points $B'$
and $C'$ are the mirror images of $B$ and $C$ with respect to the line $\theta = \theta_-$.
Moreover, on the shock front $E_s$, one has
\begin{align}\label{spu}
u =  u_- - \frac{p-p_-}{\rho_- u_-},\quad
\rho =\frac{(\gamma+1)p+(\gamma-1)p_-}{(\gamma-1)p+(\gamma+1)p_-} \rho_-.
\end{align}
(It should be pointed out that the notations used in this Remark are independent and have no relations to the ones in other parts of the paper.)
\end{rem}
The domain $\mathcal{D} $ is divided by the shock front $E_s$ into two parts: the supersonic region $\mathcal{D}_-$ and the subsonic region $\mathcal{D}_+$ respectively (see Figure \ref{fig:1}), which are:
	\begin{align}
	& \mathcal{D}_- = \{ (x_1,x_2)\in \mathbb{R}^2 : 0 < x_1 < \varphi(x_2), \ 0 < x_2 < \phi(x_1)\},\label{9}\\
	& \mathcal{D}_+ = \{ (x_1,x_2)\in \mathbb{R}^2 : \varphi(x_2) < x_1 < L,\ 0 < x_2 <\phi(x_1)\}.\label{10}
	\end{align}
  Thus, the problem determining the states of the flow with a single transonic shock in the nozzle can be mathematically formulated as a free boundary problem described below.

\textbf{Transonic shock problem} {\bf {$\llbracket\textit{TSP}\rrbracket$}}.

 Try to find a transonic shock solution $(U_-, U_+, \psi)$ to Euler equations \eqref{eq1}-\eqref{eq4} in domain $\mathcal{D}$ such that:
\vspace{-0.15cm}
\begin{enumerate}
\item In $ \mathcal{D}_- $, $ U =  U_-(x_1,x_2)$ is the supersonic solution and satisfies Euler equations \eqref{eq1}-\eqref{eq4} with boundary conditions:
\vspace{-0.1cm}
	\begin{align}
	U_-=& U_-(0,x_2),&\quad &\text{on} &\quad  E_0,\label{11}\\
	\theta_- =& 0, &\quad &\text{on} &\quad W_2 \cap\overline{\mathcal{D}_-},\label{12}\\
\theta_- =& \arctan\phi'(x_1), &\quad &\text{on} &\quad W_4 \cap\overline{\mathcal{D}_-},\label{13}
	\end{align}
where $U_-(0,x_2)\in \mcc^{2,\alpha}([0,1])^4$ is a given vector function for the state of the flow at the entrance of the nozzle;

\item In $ \mathcal{D}_+ $, $ U =  U_+(x_1,x_2)$ is the subsonic solution and satisfies Euler equations \eqref{eq1}-\eqref{eq4} with boundary conditions
\vspace{-0.1cm}
	\begin{align}
	\theta_+ =& 0, &\quad &\text{on} &\quad W_2 \cap\overline{\mathcal{D}_+},\label{14}\\
\theta_+ = & \arctan\phi'(x_1), &\quad &\text{on} &\quad W_4 \cap\overline{\mathcal{D}_+},\label{15}\\
	p_+ =& \mathcal{P}(L, x_2),&\quad &\text{on}&\quad E_L,\label{16}
	\end{align}
where $ \mathcal{P}\in \mathcal{C}^{2,\alpha}(\overline{\mathbb{R}}_+)$ is a given function for the receiver pressure at the exit of the nozzle;
\item On the shock front $E_s$, the R-H conditions $\eqref{5}$-$\eqref{8}$ hold.
\end{enumerate}
\begin{rem}
\eqref{12}-\eqref{15} are slip boundary conditions on the nozzle walls.
\end{rem}

When $\phi(x_1) = \phi_0(x_1) \equiv 1$, it is well known that there exist planar transonic shock solutions $(\overline{U}_-, \overline{U}_+, \overline{\psi}(x_2) )$ to the problem {\bf {$\llbracket\textit{TSP}\rrbracket$}},
where $\overline{U}_- = ( \bar{q}_-,0, \bar{p}_-, \bar{s}_-)^\top$, $\overline{U}_+ = ( \bar{q}_+,0,\bar{p}_+,  \bar{s}_+)^\top$ and the position of the plane shock front $\overline{\psi}(x_2)\equiv \bar{x}_s$ with $\bar{x}_s$ being an arbitrary value in $(0,L)$, satisfying the R-H conditions:
\begin{align}
&[\bar{\rho} \bar{q}] = \bar{\rho}_{+} \bar{q}_{+} - \bar{\rho}_{-} \bar{q}_{-}=0,\label{bg1}\\
&[\bar{p}+\bar{\rho} \bar{q}^2] =  (\bar{p}_{+}+\bar{\rho}_{+} \bar{q}_{+}^2) - (\bar{p}_{-}+\bar{\rho}_{-} \bar{q}_{-}^2)=0,\label{bg2}\\
&[\bar{\mathrm{B}}] =  \bar{\mathrm{B}}_{+} - \bar{\mathrm{B}}_{-}=0.\label{bg3}
\end{align}

As $\phi$ is a small perturbation of $\phi_0$, by applying the methods developed in \cite{FB63} by Fang-Xin, it can be established the existence of the solution to the problem {\bf {$\llbracket\textit{TSP}\rrbracket$}} as the pressure at the exit satisfies certain conditions. Furthermore, if $\phi'(x_1)>0$, namely, the nozzle is strictly expanding, only one solution to the problem {\bf {$\llbracket\textit{TSP}\rrbracket$}} can be established. Motivated by the uniqueness of transonic shocks for radial flows in expanding nozzles established in \cite{CR,LXY2009,LiXY2009,LiXY2010,LiXinYin2011PJM,LXY2013}, one may anticipate that the shock solution obtained via the methods in \cite{FB63} is the only one with the prescribed boundary data. In this paper, without the assumption that the flow behind the shock front is a small perturbation of $\overline{U}_+$, we are trying to figure out sufficient conditions and assumptions on the boundary data and shock solutions such that the uniqueness of transonic shocks can be established for steady flows in a strictly expanding nozzle.
\subsection{Existence of transonic shock solutions}
Let $\phi(x_1) \defs 1+ x_1 \tan\sigma$, where $\sigma>0$ is a sufficiently small constant such that $\phi(x_1)$ is a small perturbation of $\phi_0$. Obviously
\begin{align}\label{specialcondition}
  \arctan\phi'(x_1) = \sigma.
\end{align}
Let the state of the flow at the entrance
\begin{align}
   U_-(0,x_2)\defs\overline{U}_- + \sigma \cdot {U}_0(x_2),
\end{align}
where $ U_0(x_2) =\big( p_0(x_2), \theta_0(x_2), q_0(x_2), s_0 (x_2)\big)^\top \in \mcc^{2,\alpha}([0,1])^4$. In addition, the incoming data $U_-(0,x_2)$ satisfies the following compatibility conditions at the points $(0,0)$ and $(0,1)$ respectively:
\begin{align}
   &\theta_0(0) = 0,
   \quad \theta_0 (1) =\sigma,\label{CC0}\\
&p_0'(0) = 0, \quad \theta_0'(1) + \frac{M_-^2(0,1) -1}{\rho_-(0,1) q_-^2(0,1) \tan\sigma}p_0'(1)=0,\label{CC1}\\
&(M_-^2(0,0) -1)\theta_0^{''} (0) + \Big( \frac{1}{\gamma c_v}s_0'(0)  -\frac{2}{q_-(0,0)}q_0'(0)\Big)\theta_0'(0) =0,\\
 &\Big( M_-^2(0,1) -1 + \tan^2\sigma\Big)\theta_0^{''}(1) + \frac{2(M_-^2(0,1)-1)\tan\sigma}{\rho_-(0,1) q_-^2(0,1)}p_0^{''}(1)\notag\\
 &+\frac{ M_-^2(0,1) -1 + \tan^2\sigma}{ M_-^2(0,1) -1}\Big( \frac{1}{\gamma c_v}s_0'(1) - \frac{2}{q_-(0,1)} q_0'(1)   \Big)  \theta_0^{'}(1)\notag\\
  & +\frac{ 2 \widehat{\mathcal{M}_-} (0,1)\sin\sigma}{(M_-^2(0,1)-1)^2\cos^3\sigma} \big( \theta_0^{'}(1) \big)^2 =0,\label{CC2}
\end{align}
where
\begin{align*}
  \widehat{\mathcal{M}_-} (0,1)\defs& M_-^2(0,1)-1 - \gamma M_-^4(0,1)\sin^2\sigma \notag\\
  &+ M_-^2(0,1)\big(  1+M_-^4(0,1) -2M_-^2(0,1)\big)\cos^2\sigma.
  \end{align*}
Let the receiver pressure at the exit
\begin{align}\label{specialconditionP}
 \mathcal{P}(L,x_2) \defs  \bar{p}_+ +  \sigma P_e,
\end{align}
where $P_e$ is a constant and satisfying
\begin{align}\label{Peinterval}
 \frac{1-\bar{M}_+^2}{\bar{\rho}_+^2 \bar{q}_+^3}\eta_0 P_e\in \big( (1-{\kappa})L+ \mathcal{G}, \,\, L + \mathcal{G} \big),
\end{align}
where
\begin{align}
\mathcal{G} \defs& \int_{0}^{\eta_0}(1 - \kappa)p_0 - \frac{1}{\bar{\rho}_-\bar{q}_-}\big(\kappa_1(p_0 + \bar{\rho}_-\bar{q}_- q_0) + \kappa_2 s_0   \big)\dif \eta,\\
 \eta_0 \defs& \int_0^1  (\rho_- q_- \cos\theta_-) (0,x_2) \dif x_2,\\
  {\kappa} \defs& \Big(\frac{\gamma - 1}{\gamma \bar{p}_+} + \frac{1}{\bar{\rho}_+ {\bar{q}_+}^2}\Big) [\bar{p}] >0,\label{kappaeq1}\\
  \kappa_1\defs &-\frac{1}{\bar{\rho}_-\bar{q}_-^2}\Big( \kappa + \frac{[\bar{p}]}{\bar{\rho}_+\bar{q}_+^2}\Big)    - \frac{\gamma-1}{\gamma \bar{p}_+}\Big(1-\frac{\bar{\rho}_+}{\bar{\rho}_-} \Big),\label{kappa1eq1}\\
  \kappa_2\defs&  \frac{1}{\gamma c_v}\Big( \kappa + \frac{\bar{c}_-^2 - \bar{c}_+^2}{\bar{c}_+^2}\Big).\label{kappa2eq1}
\end{align}

 By applying the methods developed in \cite{FB63} by Fang-Xin, the existence of transonic shock solutions with $\sigma>0$ being sufficiently small can be established
 by showing the existence of shock solutions reformulated under the Lagrange transformation,
which is defined by
  $$(\xi, \eta)\defs \big(x_1, \, \int_{(0,0)}^{(x_1,x_2)}\rho u(s,t) \dif t - \rho v(s,t) \dif s   \big).$$
Then the Euler equations \eqref{eq1}-\eqref{eq4} can be rewritten as the following form: (cf. \cite[(2.49)-(2.52)]{FB63})
\begin{align}
&\partial_{{\eta}} {p} - \displaystyle\frac{\sin {\theta}}{{\rho} {q}}\partial_{{\xi}} {p} + {q} \cos {\theta} \partial_{{\xi}} {\theta} =0,\label{LC1}\\
&\partial_{{\eta}} {\theta} - \displaystyle\frac{\sin {\theta}}{{\rho} {q}} \partial_{{\xi}} {\theta} - \displaystyle\frac{\cos {\theta}}{{\rho} {q}}\displaystyle\frac{1 - {{M}}^2}{{\rho} {{q}}^2 } \partial_{{\xi}}{ p} =0,\label{LC2}\\
&\rho q \partial_\xi q +  \partial_\xi p = 0,\label{LC3}\\
&\partial_\xi s=0,\label{LC4}
\end{align}
or, equivalently, the following conservation form:
\begin{align}
  &\partial_\xi \Big(\displaystyle\frac{1}{\rho q\cos\theta}\Big) - \partial_\eta
(\tan\theta) = 0,\label{Conf1}\\
&\partial_\xi (q\sin\theta) + \partial_\eta p =
0,\label{Conf2}\\
&\partial_\xi (\frac12 q^2 + i)=0,\label{Conf3}\\
&\partial_\xi s=0.\label{Conf4}
\end{align}
Moreover, the upper boundary $W_4$ becomes $ \set{\eta = \eta_0} $ with
\begin{align*}
  \eta_0 = \int_0^1  (\rho_- q_- \cos\theta_-) (0,x_2) \dif x_2.
\end{align*}
Let ${\Gamma}_s \defs\{ ({{\xi}}, {{\eta}})\in \mathbb{R}^2 : {\xi} = {\psi}({\eta}),\,\,0 <{\eta} <\eta_0 \}$ be the position of a shock front, then the R-H conditions \eqref{5}-\eqref{8}
 across the shock front are reformulated as
\begin{align}
  G_1(U_+,U_-) \defs & \Big[\frac{1}{\rho u}\Big][p]-\Big[\frac{v}{u}\Big][v] = 0,\label{g0}\\
G_2(U_+,U_-) \defs & \Big[u+\frac{p}{\rho u}\Big][p]+\Big[\frac{pv}{u}\Big][v]=0,\label{g1}\\
G_3(U_+,U_-) \defs & \Big[\frac12 q^2 + i \Big]=0,\label{g3}\\
G_4(U_+,U_-, \psi') \defs & [v] - \psi'[p]=0.\label{g4}
\end{align}
Under the Lagrange transformation, the domain $\mathcal {D}$ becomes
\begin{equation}\label{tras}
\begin{aligned}
{\Omega} = \{ ({\xi}, {\eta})\in \mathbb{R}^2 : 0 < {\xi} < L, \,\,\, 0 < {\eta} < \eta_0\},
\end{aligned}
\end{equation}
and it is separated by a shock front ${\Gamma}_s$
into two parts (see Figure \ref{fig:3L}):
the supersonic region and subsonic region, denoted by
\begin{align}
{\Omega}_- =& \{ ({\xi}, {\eta})\in \mathbb{R}^2 : 0 < {\xi} < \psi({\eta}), \,\,\, 0 < {\eta} < \eta_0\},\label{LOmega-}\\
{\Omega}_+ =& \{ ({\xi}, {\eta})\in \mathbb{R}^2 : \psi({\eta}) < {\xi} < L, \,\,\, 0 < {\eta} < \eta_0\},\label{LOmega+}
\end{align}
respectively.
The boundaries $E_0$, $W_2$, $E_L$, $W_4$ become
\begin{align*}
&{\Gamma}_1 = \{ ({{\xi}}, {{\eta}})\in \mathbb{R}^2 : {\xi} =0,\,\,\, 0 <{\eta} <\eta_0 \},\\
&{\Gamma}_2 = \{ ({{\xi}}, {{\eta}})\in \mathbb{R}^2 :  0 <{\xi} < L, \,\,\,{\eta} =0 \},\\
&{\Gamma}_3 = \{ ({{\xi}}, {{\eta}})\in \mathbb{R}^2 : {\xi}  = L ,\,\,\, 0<{\eta} <\eta_0 \},\\
&{\Gamma}_4 = \{ ({{\xi}}, {{\eta}})\in \mathbb{R}^2 :  0 < {\xi} < L , \,\,\,{\eta} =\eta_0 \}.
\end{align*}
\begin{figure}[!h]
\centering
\includegraphics[width=0.45\textwidth]{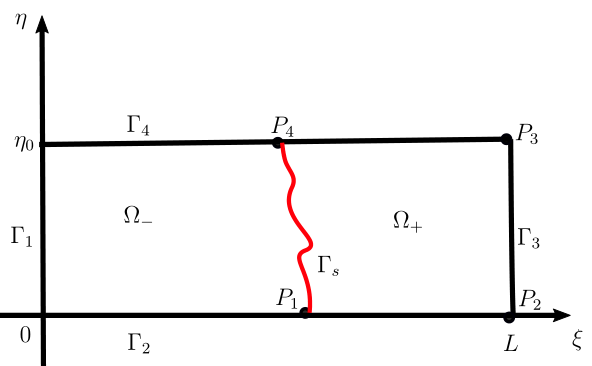}
\caption{The steady Euler flow with a transonic shock front in the Lagrangian coordinates.\label{fig:3L}}
\end{figure}

Then the problem {\bf {$\llbracket\textit{TSP}\rrbracket$}} becomes the following problem:

\textbf{Transonic shock problem} {\bf {$\llbracket\textit{TSPL}\rrbracket$}}.

 Try to find a transonic shock solution $(U_-, U_+, \psi)$ to Euler equations \eqref{LC1}-\eqref{LC4} in domain $\Omega$ such that:
\begin{enumerate}
	\item $ U_{-}(\xi,\eta) $ is the supersonic solution and satisfies the equations  \eqref{LC1}-\eqref{LC4} in $ \Omega_- $ with the boundary conditions:
	\begin{align}
	&U_-= \overline{U}_- + \sigma U_0 (Y_0(\eta)),&\quad  &\text{on} \quad   \Gamma_1,\label{eq808}\\
&\theta_- =0, &\quad &\text{on} \quad  \Gamma_{2}\cap\overline{\Omega_-},\\
	&\theta_- = \sigma,&\quad &\text{on} \quad  \Gamma_{4}\cap\overline{\Omega_-},\label{eq909}
	\end{align}
	where
\begin{align}
  Y_0(\eta) = \int_0^\eta \displaystyle\frac{1}{\rho_- q_- \cos\theta_-(0,s) } \dif s;
\end{align}
	
	\item  $ U_+(\xi,\eta) $ is the subsonic solution and satisfies the equations  \eqref{LC1}-\eqref{LC4} in $ \Omega_+ $ with the following boundary conditions:
	\begin{align}
&\theta_+ = 0,&\quad &\text{on} \quad  \Gamma_{2}\cap\overline{\Omega_+},\\
	&\theta_+ = \sigma, &\quad &\text{on} \quad \Gamma_{4}\cap\overline{\Omega_+},\label{eq911}\\
	&p_+ =\bar{p}_+ +  \sigma P_e,&\quad &\text{on}\quad \Gamma_3;\label{eq910}
	\end{align}

	\item On the shock front $ \Gamma_s$, the Rankine-Hugoniot conditions $\eqref{g0}$-$\eqref{g4}$ hold for the states $(U_{-}, U_+)$.

\end{enumerate}



Based on the methods developed in \cite{FB63} by Fang-Xin,
one can establish the existence of the solutions to the problem {\bf {$\llbracket\textit{TSPL}\rrbracket$}} as the pressure at the exit satisfies the condition \eqref{Peinterval}. As the following theorem shows.
\begin{thm}\label{thm1:existence}
  Suppose that \eqref{specialcondition}-\eqref{Peinterval} hold. Then there exists a sufficiently small positive constant $\sigma_0$, only depends on $\overline{U}_{\pm}$ and $L$, such that for any $0< \sigma \leq \sigma_0$, there exists a solution $(U_-^{\sharp}, U_+^{\sharp} ; \varphi_{\sharp})$ to the problem {\bf {$\llbracket\textit{TSPL}\rrbracket$}}, and the solution satisfies the following estimates:
	\begin{align}
	&\| U_-^{\sharp} - \overline{U}_- \|_{\mcc^{2,\alpha}(\Omega_-)}  \leq C_-^{\sharp} \sigma,\label{existU-}\\
	&\|{U}_+^{\sharp} -\overline{U}_+\|_{(\Omega_+;P_i)}+ \|\varphi_{\sharp}'\|_{\mcc^{0,\alpha} (\Gamma_s)}\leq C_+^{\sharp} \sigma,\label{existU+}
	\end{align}
	where the constant $C_-^{\sharp}$ depends on $\overline{U}_-$, $U_0$, $\gamma$ and $\alpha$, and the constant $C_+^{\sharp}$ depends on  $\overline{U}_{\pm}$, $U_0$, $\gamma$, $L$, $P_e$ and $\alpha$.

\end{thm}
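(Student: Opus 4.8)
The plan is to follow the framework of Fang--Xin \cite{FB63}, decoupling the free boundary problem {\bf {$\llbracket\textit{TSPL}\rrbracket$}} into a hyperbolic part for the upstream flow, an elliptic part for the downstream flow, and a scalar free boundary (the shock position $\psi$), and then closing the coupling by a fixed point argument exploiting the smallness of $\sigma$. First I would solve the supersonic flow in $\Omega_-$. Since \eqref{LC1}--\eqref{LC4} are hyperbolic when $M>1$ and the data \eqref{eq808}--\eqref{eq909} amount to Cauchy data on $\Gamma_1$ together with the slip conditions $\theta_-=0$ on $\Gamma_2$ and $\theta_-=\sigma$ on $\Gamma_4$, the supersonic state $U_-$ is determined in the whole strip $0<\xi<L$ \emph{independently} of the shock by characteristic estimates for the system linearized near $\overline{U}_-$. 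The compatibility conditions \eqref{CC0}--\eqref{CC2} are precisely what secures $\mcc^{2,\alpha}$ regularity up to the corners $(0,0)$ and $(0,\eta_0)$, giving \eqref{existU-} with $C_-^{\sharp}$ depending only on $\overline{U}_-,U_0,\gamma,\alpha$. This $U_-$ is then frozen and supplies the upstream trace on any candidate shock $\Gamma_s=\{\xi=\psi(\eta)\}$.

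Next I would reduce the downstream problem. In the Lagrangian form, \eqref{Conf3}--\eqref{Conf4} show that the Bernoulli function and the entropy are transported along $\eta$-lines, so $B_+$ and $s_+$ in $\Omega_+$ are determined by their traces on $\Gamma_s$; the R-H conditions \eqref{g0}--\eqref{g3} together with the shock polar \eqref{op} then express these traces, as well as $q_+$ and $\psi'$, as functions of the single scalar $p_+$ and the frozen $U_-$. This leaves the first order elliptic system \eqref{LC1}--\eqref{LC2} for $(p_+,\theta_+)$, which I would convert into a second order elliptic equation for $p_+$ (elliptic since $1-M^2>0$) carrying: an oblique/Robin condition on $\Gamma_s$ from the shock polar; oblique (Neumann-type) conditions on the walls $\Gamma_2,\Gamma_4$ obtained by inserting $\theta_+=0$, $\theta_+=\sigma$ into \eqref{LC1}--\eqref{LC2}; and the Dirichlet condition $p_+=\bar{p}_++\sigma P_e$ on $\Gamma_3$ coming from \eqref{eq910}. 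The shock curve is governed by the ODE $\psi'=[v]/[p]$ from \eqref{g4}, with the intersection height $\psi(0)$ on $\Gamma_2$ as the one remaining free parameter.

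Then I would close the system. Integrating the conservation law \eqref{Conf1} over $\Omega_+$ and using the divergence theorem, and combining with the R-H data on $\Gamma_s$ and the prescribed exit pressure, produces a single global solvability (compatibility) relation; at leading order in $\sigma$ this is a linear balance whose coefficients are exactly $\kappa,\kappa_1,\kappa_2$ and $\mathcal{G}$ from \eqref{kappaeq1}--\eqref{kappa2eq1}, and it pins down the leading order shock position. The hypothesis \eqref{Peinterval} is precisely the range of $P_e$ for which the selected intersection point lies strictly inside $(0,L)$. With the position so determined, I would set up the iteration: given an approximate $(\psi,p_+)$, update $U_-|_{\Gamma_s}$, transport $(s_+,B_+)$, solve the elliptic problem for $p_+$ via Schauder estimates in the weighted space $\|\cdot\|_{(\Omega_+;P_i)}$ adapted to the corner points $P_i$, recover $(\theta_+,q_+)$, and integrate the shock ODE. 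Smallness of $\sigma$ renders this map a contraction, yielding a fixed point $(U_+^{\sharp};\varphi_{\sharp})$ with \eqref{existU+}.

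I expect the main obstacle to be the self-consistent treatment of the free boundary together with the solvability condition: one must verify that the leading order solvability relation depends nondegenerately (monotonically) on $\psi(0)$, so the position is uniquely selected at each iteration, and that all correction terms are genuinely higher order in $\sigma$, uniformly up to the corners. The accompanying difficulty is controlling the elliptic regularity in the weighted Hölder space $\|\cdot\|_{(\Omega_+;P_i)}$ near the vertices formed by $\Gamma_s\cap\Gamma_2$, $\Gamma_s\cap\Gamma_4$ and $\Gamma_4\cap\Gamma_3$, while keeping every constant independent of the particular (non-perturbative) subsonic state $U_+$; this uniformity is exactly what the estimate \eqref{existU+} must encode.
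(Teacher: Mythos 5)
Your proposal follows essentially the same route as the paper, which does not prove Theorem \ref{thm1:existence} in detail but invokes the method of Fang--Xin \cite{FB63}: solve the supersonic flow first by characteristics using the compatibility conditions \eqref{CC0}--\eqref{CC2}, reduce the downstream problem to an elliptic system with an oblique condition on the shock from the shock polar, determine the shock position from the global solvability relation whose leading-order coefficients are $\kappa,\kappa_1,\kappa_2,\mathcal{G}$ (with \eqref{Peinterval} placing the intersection point inside $(0,L)$), and close by a contraction for small $\sigma$ in the weighted space $\|\cdot\|_{(\Omega_+;P_i)}$. The only cosmetic difference is that you parametrize the free shock by $\psi(0)$ on the lower wall whereas the paper uses $\psi(\eta_0)$ on the upper wall; this is immaterial.
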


\begin{rem}

The norms of \eqref{existU-} and \eqref{existU+} are defined as follows:
\begin{equation}
   \|u\|_{\mcc^{k,\alpha}(\Omega)} : = \sum_{|\mathbf{m}|\leq k} \sup\limits_{\mathbf{x}\in\Omega}|D^{\mathbf{m}} u(\mathbf{x})|+ \sum_{|\mathbf{m}| =  k} \sup\limits_{\mathbf{x},\mathbf{y}\in\Omega; \mathbf{x}\neq \mathbf{y}}\frac{|D^{\mathbf{m}} u(\mathbf{x}) - D^{\mathbf{m}} u(\mathbf{y})|}{|\mathbf{x}-\mathbf{y}|^\alpha}.
\end{equation}
In addition, for any $\mathbf{x}$, $\mathbf{y}$ $\in$ $\Omega$, define
\begin{equation}
d_{\mathbf{x}}: = \dist(\mathbf{x}, \Gamma),\,\,\, \text{and} \,\,\,
d_{\mathbf{x},\mathbf{y}}:= \min (d_{\mathbf{x}},d_{\mathbf{y}}).
\end{equation}
Let $\alpha\in(0,1)$ and $\delta\in \mathbb{R}$, we define:
\begin{align}
&[u]_{k ,0;\Omega}^{(\delta;\Gamma)} : = \sup\limits_{\mathbf{x}\in\Omega, |\mathbf{m}| = k}\Big(d_{\mathbf{x}}^{\max(k + \delta,0)}|D^{\mathbf{m}} u(\mathbf{x})| \Big),\\
&[u]_{k,\alpha;\Omega}^{(\delta;\Gamma)} : = \sup\limits_{{\mathbf{x},\mathbf{y}}\in\Omega, \mathbf{x}\neq\mathbf{y}, |\mathbf{m}| = k}\left(d_{\mathbf{x},\mathbf{y}}^{\max( k +\alpha+\delta,0)}\displaystyle\frac
{|D^{\mathbf{m}} u(\mathbf{x})- D^{\mathbf{m}} u(\mathbf{y})|}{|\mathbf{x} - \mathbf{y}|^\alpha}\right),\\
& \| u \|_{k,\alpha;\Omega}^{(\delta;\Gamma)} :  =\sum_{i =0}^{k} [u]_{i,0;\Omega}^{(\delta;\Gamma)} + [u]_{k,\alpha;\Omega}^{(\delta;\Gamma)}.
\end{align}
For $ U=(p,\theta,q,s)^\top $, define
\begin{align}\label{eq099}
\|{U} \|_{(\Omega_+;P_i)}: = \|p \|_{1,\alpha;\Omega_+}^{(-\alpha;\{P_i\})}
+\|\theta\|_{1,\alpha;\Omega_+}^{(-\alpha;\{P_i\})}+ \|q \|_{1,\alpha;\Omega_+}^{(-\alpha;\{\overline{\Gamma_2^+}\cup \overline{\Gamma_4^+}\})}
+\|s \|_{1,\alpha;\Omega_+}^{(-\alpha;\{\overline{\Gamma_2^+}\cup \overline{\Gamma_4^+}\})},
\end{align}
where
\begin{align*}
\Gamma_2^+ \defs \Gamma_2 \cap \Omega_+,\quad \Gamma_4^+\defs \Gamma_4\cap \Omega_+,
\end{align*}
$\Gamma_2$, $\Gamma_4$ and the points $P_i$ $(i = 1,2,3,4)$ are defined in Figure \ref{fig:3L}.

\end{rem}

  \begin{rem}
  In the supersonic domain $\Omega_-$, applying the theorems of Chapter 2-Chapter 3 in \cite{LT1985} and compatibility conditions \eqref{CC0}-\eqref{CC2}, it is easy to check that there exists a unique solution $U_-\in \mcc^{2,\alpha}(\overline{\Omega_-})$ satisfying
  \begin{align}\label{zz0}
    \| U_- - \overline{U}_- \|_{\mcc^{2,\alpha}(\overline{\Omega_-})}  \leq C_- \sigma,
  \end{align}
where the constant $C_-$ depends on $\overline{U}_-$, $U_0$ and $\alpha$.
Thus, it suffices to show the uniqueness of the shock positions and the subsonic solutions behind the shock front.
\end{rem}

\subsection{Uniqueness of transonic shock solutions for the problem {\bf {$\llbracket\textit{TSPL}\rrbracket$}}}

In this paper, without the assumption that the flow behind the shock front is a small perturbation of the state $\overline{U}_+$, we are trying to figure out sufficient conditions and assumptions on the boundary data and shock solutions such that the uniqueness of transonic shocks to the problem {\bf {$\llbracket\textit{TSPL}\rrbracket$}} can be established for steady flows in a strictly expanding nozzle. As the following theorem shows:
\begin{thm}\label{thm2}
Suppose that \eqref{specialcondition}-\eqref{Peinterval} hold and
$(U_-^{\aleph}, U_+^{\aleph}; \varphi_{\aleph})$ is a transonic shock solution to the problem
{\bf {$\llbracket\textit{TSPL}\rrbracket$}} with $U_-^{\aleph} =(p_-^{\aleph}, \theta_-^{\aleph}, q_-^{\aleph}, s_-^{\aleph})$ and $U_+^{\aleph} =(p_+^{\aleph}, \theta_+^{\aleph}, q_+^{\aleph}, s_+^{\aleph})$.
Let $\epsilon>0$ be a small constant. Assume that $p_+\geq \bar{p}_* + \epsilon$ on the shock front $\Gamma_s$, the Mach number $M_+^2\leq 1-\epsilon$ and the horizontal velocity $u_+>0$ in the domain $\Omega_+$. Then there exists a sufficiently small constant $\hat{\sigma}>0$, only depending on $\overline{U}_{\pm}$, $L$ and $\epsilon$, such that for any $0< \sigma \leq \hat{\sigma}$, $(U_-^{\aleph}, U_+^{\aleph}; \varphi_{\aleph})$ coincides with $(U_-^{\sharp}, U_+^{\sharp} ; \varphi_{\sharp})$. That is, there exists at most one transonic shock solution to the problem
{\bf {$\llbracket\textit{TSPL}\rrbracket$}}. Moreover, $(U_+^{\aleph}, \varphi_{\aleph}')$ satisfies the estimate
\begin{align}
		\|{U}_+^{\aleph} -\overline{U}_+\|_{(\Omega_+;P_i)}+ \|\varphi_{\aleph}'\|_{\mcc^{0,\alpha} (\Gamma_s)}\leq C_+ \sigma,
\end{align}
where the constant $C_+$ only depends on $\overline{U}_{\pm}$, $U_0$, $\gamma$, $L$, $P_e$ and $\alpha$.
\end{thm}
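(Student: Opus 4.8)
The plan is to reduce the whole problem to the subsonic region together with the free boundary, to establish uniform a priori bounds for an \emph{arbitrary} admissible solution, then to pin down the shock location through a monotone solvability condition, and finally to close a contraction. First I would dispose of the supersonic part: in $\Omega_-$ the state $U_-$ is governed by the hyperbolic initial--boundary value problem with data $\overline{U}_-+\sigma U_0$ on $\Gamma_1$ and the slip conditions \eqref{eq808}--\eqref{eq909}, so by the remark following Theorem \ref{thm1:existence} one already has $U_-^{\aleph}=U_-^{\sharp}=:U_-$ with $\|U_--\overline{U}_-\|_{\mcc^{2,\alpha}(\overline{\Omega_-})}\leq C_-\sigma$. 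Hence it suffices to show that the subsonic state and the shock position coincide, and from now on I work in $\Omega_+$ with the R--H relations \eqref{g0}--\eqref{g4} playing the role of (free) boundary conditions on $\Gamma_s$.

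Next I would establish the a priori estimate that yields the ``Moreover'' part. The Lagrangian system \eqref{LC1}--\eqref{LC4} decouples into a first-order system for $(p_+,\theta_+)$ coming from \eqref{LC1}--\eqref{LC2}, whose principal symbol is positive definite exactly when the flow is subsonic, so $M_+^2\leq 1-\epsilon$ makes it uniformly elliptic; and transport equations \eqref{LC3}--\eqref{LC4} determining $q_+,s_+$ along the streamlines $\eta=\mathrm{const}$, which are nondegenerate because $u_+>0$. The boundary conditions are $\theta_+=0,\sigma$ on $\Gamma_2^+,\Gamma_4^+$ (Dirichlet for $\theta_+$, equivalently a Neumann-type condition for $p_+$), $p_+=\bar{p}_++\sigma P_e$ on $\Gamma_3$, and, on $\Gamma_s$, a single relation between $p_+$ and $\theta_+$ obtained by eliminating $q_+,s_+,\psi'$ through the shock relations \eqref{spu}, \eqref{g3} and the polar \eqref{op}. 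The hypothesis $p_+\geq \bar{p}_*+\epsilon$ keeps the trace on the admissible branch of the shock polar with uniformly bounded slope, so this boundary operator is uniformly oblique. Since every piece of boundary data differs from the background by $O(\sigma)$, a maximum principle pins $p_+$ near $\bar{p}_+$ and weighted Schauder estimates (with the corner weights at the points $P_i$ built into $\|\cdot\|_{(\Omega_+;P_i)}$) give $\|U_+^{\aleph}-\overline{U}_+\|_{(\Omega_+;P_i)}+\|\varphi_\aleph'\|_{\mcc^{0,\alpha}(\Gamma_s)}\leq C_+\sigma$; crucially, the closeness to $\overline{U}_+$ is \emph{forced} by the structural conditions rather than assumed.

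The heart of the argument is the derivation of the solvability condition for the elliptic sub-problem and the proof of its monotonicity in the shock location. Integrating the conservation form \eqref{Conf1}--\eqref{Conf2} over $\Omega_+$ and inserting the boundary conditions produces a scalar compatibility identity which, at leading order in $\sigma$, takes the affine form
\[
\frac{1-\bar{M}_+^2}{\bar{\rho}_+^2 \bar{q}_+^3}\,\eta_0 P_e = L - \kappa\, x_s + \mathcal{G} + O(\sigma),
\]
where $x_s\in(0,L)$ parametrizes the intersection of $\Gamma_s$ with the walls and $\kappa$ is as in \eqref{kappaeq1}. Because $\kappa>0$ for a strictly expanding nozzle (recall $[\bar{p}]>0$), the right-hand side is strictly decreasing in $x_s$, and together with the interval \eqref{Peinterval} this determines $x_s$ uniquely in $(0,L)$. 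I would make this rigorous by showing that the full nonlinear solvability functional $\mathcal{F}(x_s;\sigma)$ is $\mcc^1$ with $\partial_{x_s}\mathcal{F}=-\kappa+O(\sigma)<0$ for $\sigma$ small, hence strictly monotone with a unique zero. The main obstacle lies precisely here: bounding the $O(\sigma)$ nonlinear corrections to $\mathcal{F}$ uniformly, which relies on the a priori bounds of the previous step and on the nondegeneracy supplied by $p_+\geq\bar{p}_*+\epsilon$.

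Finally, with the shock position pinned uniquely, I would close the argument by contraction. The difference $(U_+^{\aleph}-U_+^{\sharp},\,\varphi_\aleph-\varphi_\sharp)$ solves a linear elliptic problem whose coefficients and boundary operators are $O(\sigma)$ perturbations of the background ones; using the uniform estimates and the strict monotonicity of the solvability condition to remove the free-position direction, the solution operator becomes a contraction on the ball $\{\|\cdot\|_{(\Omega_+;P_i)}\leq C_+\sigma\}$ once $\sigma\leq\hat{\sigma}$ is small enough. Its unique fixed point is the common solution, so $U_+^{\aleph}=U_+^{\sharp}$ and $\varphi_\aleph=\varphi_\sharp$, which together with $U_-^{\aleph}=U_-^{\sharp}$ gives the asserted uniqueness.
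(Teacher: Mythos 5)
Your plan follows the paper's proof essentially step for step: reduce to the subsonic region, establish the a priori estimate $\|U_+-\overline{U}_+\|_{(\Omega_+;P_i)}+\|\psi'\|_{\mcc^{0,\alpha}}\leq C_+\sigma$ without smallness assumptions by exploiting uniform ellipticity from $M_+^2\leq 1-\epsilon$ and uniform obliqueness of the shock-polar relation from $p_+\geq\bar p_*+\epsilon$ (the paper's $\partial_p H_1\geq\varepsilon_0>0$), then prove uniqueness of the shock position via the solvability functional with $\widetilde{\mathcal F}'(\delta\xi)=-\kappa+O(\sigma)<0$, and finally close by contraction. The only place where your sketch compresses a genuinely delicate step is the a priori bound itself: the paper first bounds $\theta$ (not $p$) by explicit barrier constructions near the corner points $P_1,P_4$ where the oblique shock condition meets the wall conditions, and only then recovers $\psi'$, the trace of $U$ on $\Gamma_s$, and the maximum principle for $p$ — but this is a matter of detail within the same approach.
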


\begin{rem}
$\bar{p}_*$ is a positive constant corresponding to the second component of the points $B$ and $B_*$ (see Remark \ref{shockremark}) for $\theta_-=0$, which is determined by the uniform supersonic state $\overline{U}_-=( \bar{q}_-,0, \bar{p}_-, \bar{s}_-)^\top$.
\end{rem}

In order to prove Theorem \ref{thm2},
one of the key difficulties is to establish the \emph{a priori} estimates for the subsonic solutions $U_+$ behind the shock front,
without the assumption that the flow behind the shock front is a small perturbation of $\overline{U}_+$.
To this end, one needs to analyze the state of the flow field before and after the shock front carefully.
Motivated by the ideas of Fang-Liu-Yuan in \cite{FL},
a condition on the pressure of the flow across the shock front is proposed, and the {\it a priori} estimates for the states $U_+$ can be established with further careful analysis on the elliptic sub-problem for the subsonic flow behind the shock front.

Another difficulty is to prove the uniqueness of the positions of intersection points between the shock front and the upper wall of the nozzle. It is observed in \cite{FB63} that the position of the intersection points will be determined by the solvability condition for the elliptic sub-problem for the subsonic flow behind the shock front, which is a nonlinear equation for the position of the point.
In addition, there may exist more than one shock solutions for steady Euler flows in a general nozzle with both expanding and contracting portions, which yields the
non-uniqueness of the transonic shock solutions for general case.
Therefore, one needs to demonstrate that there exist at most one solution to this nonlinear equation to show the uniqueness of the position of the intersection point.
Based on the \emph{a priori} estimates of $U_+$, one can further establish that the monotonicity of the solvability condition with respect to the position of the intersection point. Thus, the uniqueness of the position of the intersection point can be obtained. Then, one can compare two transonic shock solutions, and show that they should coincide if the perturbation of the nozzle is sufficiently small, which implies the uniqueness of the transonic shock solutions.

Thanks to great efforts made by many mathematics in the past decades, up to now, for steady potential flows or Euler flows, the existence, stability, and uniqueness of transonic shocks have been studied from
different viewpoints, for instance, see \cite{BF,GCM2007,GK2006,GM2003,CY,CS2005,SC2009,CR,EmbidGoodmanMajda1984,FG,FG3, FX,FB63,LXY2009,LiXY2009,LiXY2010,LiXinYin2011PJM,LXY2013,LXY2016, LYY,Liu1982ARMA,Liu1982CMP,ParkRyu_2019arxiv,Yong,WS94,XYY,XinYin2005CPAM,YH} and references therein. In \cite{CR}, Courant and Friedrichs first gave a systematic analysis from the viewpoint of nonlinear partial differential equations.
Since then, in order to establish a rigorous mathematical analysis for the flow
pattern, various nonlinear PDE models and different boundary conditions have been
proposed. For the unsteady transonic gas flows governed by the quasi-one-dimensional models, for instance, in \cite{Liu1982ARMA,Liu1982CMP},
T.P. Liu proved the existence of shocks solutions for certain given Cauchy data, and established a stability theory for them.
In \cite{EmbidGoodmanMajda1984}, Embid-Goodman-Majda showed that, in general, there exist more than one shock solutions for the steady quasi-one-dimensional model. As to
the steady multi-dimensional models such as potential equations or the Euler system, there are two typical kinds of nozzles. One is an expanding nozzle of an angular
sector or a diverging cone. In \cite{CR}, Courant and Friedrichs established the unique existence of a transonic shock solution in such a nozzle with given constant pressure at the exit.
Based on this shock solution, the well-posedness of shock solutions in an expanding nozzle has been established, with prescribed pressure at the exit as suggested by Courant and Friedrichs, for instance, see \cite{SC2009,LXY2009,LiXY2009,LiXY2010,
LiXinYin2011PJM,LXY2013,Yong,WS94} and reference therein.
The other is a flat nozzle with two parallel walls. The well-posedness of transonic shock solutions have been established for generic small perturbations of boundary data and the geometry of the nozzle, for instance, see \cite{GK2006,GM2003,FB63,
XYY,XinYin2005CPAM} and reference therein.

 There are also studies on the uniqueness of steady flows in a nozzle. Liu-Yuan proved in \cite{LYY} the uniqueness of transonic potential flows in 3-d divergent nozzles.
And in \cite{CY}, for given uniform supersonic upstream flows in a straight duct, Chen-Yuan established the uniqueness of solutions with a transonic shock in a duct for steady potential flows. Moreover, in \cite{FL}, Fang-Liu-Yuan proved that the piece-wise constant transonic shock solution is the unique solution in the flat nozzle and the straight wedge respectively. Motivated by the ideas of \cite{FL},  Fang-Xiang in \cite{FX} proved the transonic shock solution is unique under small perturbations of the wedge boundary.

\subsection{Outline of the paper.}

The paper is organized as follows. In Section 2, the \emph{a priori} estimates for the subsonic solution behind the shock front will be established without the assumption that it is a small perturbation of the uniform subsonic state.
Based on the \emph{a priori }estimates,
in Section 3, the uniqueness of transonic shock solutions can be established via contraction arguments. It is worth mentioning that one of the key steps in the arguments is to prove the uniqueness of intersection points between the shock front and the nozzle walls.



%

\section{A priori estimates for subsonic solutions behind the shock front}

In this section, the \emph{a priori} estimates for subsonic solutions $U_+$ behind the shock front $\Gamma_s$ will be established.
As the following lemma shows. (To simplify the notations, in the subsonic domain behind the shock front, we will drop the subscript ``$ + $'' to let $U=(p, \theta, q, s)^\top$ in the following arguments.)


\begin{lem}\label{elliptic}
 Under the assumptions of Theorem \ref{thm2}, there exists a sufficiently small positive constant $\sigma_1$, depending on $\overline{U}_{\pm}$, $L$ and $\epsilon$, such that for any $0< \sigma \leq \sigma_1$, the solutions $( U; \psi)$ to the problem {\bf{$\llbracket\textit{TSPL}\rrbracket$}} satisfy the following estimate:
\begin{align}\label{es}
		\|{U} -\overline{U}_+\|_{(\Omega_+;P_i)} + \|\psi' \|_{\mcc^{0,\alpha}(\Gamma_s)}\leq C_+ \sigma,
\end{align}
where the constant $C_+$ only depends on $\overline{U}_{\pm}$, $U_0$, $\gamma$, $L$, $P_e$ and $\alpha$.
\end{lem}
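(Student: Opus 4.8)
The plan is to reduce the restriction of problem $\llbracket\textit{TSPL}\rrbracket$ to $\Omega_+$ to an elliptic boundary value problem for the pressure and flow angle alone, and then to \emph{manufacture} the $O(\sigma)$ smallness from the exit, wall, and R-H data together with the monotonicity of the shock polar, rather than assuming it. First I would exploit the Lagrangian structure: by \eqref{LC4} and \eqref{Conf3} the entropy $s$ and the Bernoulli quantity $\frac12 q^2 + i$ are transported along the $\xi$-streamlines, hence are functions of $\eta$ alone, determined by their traces on $\Gamma_s$. Those traces are fixed by the incoming supersonic state through \eqref{g3} and \eqref{spu}; since $\|U_-^{\aleph}-\overline{U}_-\|_{\mcc^{2,\alpha}(\overline{\Omega_-})}\le C_-\sigma$ by the remark following Theorem \ref{thm1:existence}, both $s(\eta)$ and the Bernoulli function are $O(\sigma)$-close to $\bar{s}_+$ and $\overline{\mathrm B}_+$. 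Consequently $\rho$, $q$ and $M$ become algebraic functions of $p$ and $\eta$, and \eqref{LC1}-\eqref{LC2} close up as a first-order system of generalized Cauchy--Riemann type for $(p,\theta)$; the hypothesis $M^2\le 1-\epsilon$ guarantees uniform ellipticity throughout $\Omega_+$.

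Next I would convert the free-boundary R-H conditions into an elliptic boundary condition on $\Gamma_s$. On the admissible branch the shock polar \eqref{op} yields a relation $\theta=\theta_-+\Theta(p;U_-)$ on $\Gamma_s$, while \eqref{g4} gives $\psi'=[v]/[p]$, so once $(p,\theta)$ is known on $\Gamma_s$ the front slope is determined. Here the pressure condition $p\ge\bar{p}_*+\epsilon$ is decisive: it keeps the state strictly on the subsonic arc above the critical point $B$ of Remark \ref{shockremark}, away from the degeneracy at $A$ where $\partial_p\Theta$ vanishes, so that $\Theta(\cdot\,;U_-)$ is strictly monotone with derivative bounded above and below. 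Since at the background $\theta_-=0$ and $\Theta(\bar{p}_+;\overline{U}_-)=0$, this monotonicity is exactly what lets me invert the relation quantitatively: any bound $\theta-\theta_-=O(\sigma)$ on $\Gamma_s$ forces $p=\bar{p}_++O(\sigma)$ there.

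The core estimate then treats the coupled problem as a single mixed boundary value problem: the slip conditions \eqref{eq911} on $\Gamma_2^+,\Gamma_4^+$, the exit condition \eqref{eq910} on $\Gamma_3$, and the monotone shock-polar coupling on $\Gamma_s$. Every boundary datum is $O(\sigma)$ except on $\Gamma_s$, where the relation $\theta-\theta_-=\Theta(p;U_-)$ ties the two unknowns with strict monotonicity, providing the coercive (oblique) structure needed to close the estimate. I would therefore bootstrap: the wall and exit data pin $\theta$ at order $\sigma$, the monotone polar relation upgrades this to $p=\bar{p}_++O(\sigma)$ on $\Gamma_s$, and the exit Dirichlet datum together with interior ellipticity propagate $p-\bar{p}_+=O(\sigma)$ through $\Omega_+$. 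To make this uniform up to the corners $P_1,\dots,P_4$ I would run the argument in the weighted Hölder spaces $\|\cdot\|_{1,\alpha;\Omega_+}^{(-\alpha;\{P_i\})}$ of \eqref{eq099}, whose weights absorb the corner singularities produced by the change of boundary type; the bound on $\psi'$ then follows from $\psi'=[v]/[p]$, since $[v]=O(\sigma)$ while $[p]$ stays bounded below.

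The main obstacle is precisely that one may \emph{not} begin by assuming $U$ is a small perturbation of $\overline{U}_+$: a priori the pressure on $\Gamma_s$ is controlled only by $p\ge\bar{p}_*+\epsilon$ and could be far from $\bar{p}_+$, so the coefficients of the elliptic system cannot be frozen at the background and no naive linearization is available. The smallness must instead be produced from the data, and the only channel for this is the coupling above: the flow angle is pinned to $O(\sigma)$ by the geometry, and the strict monotonicity of the shock polar on the branch $p\ge\bar{p}_*+\epsilon$ converts this into smallness of $p$ on the front. Carrying this out rigorously — obtaining a closed a priori bound with coefficients known only to be uniformly elliptic over the full admissible range of $p$, and controlling the resulting nonlinear error terms in the weighted norms near the corners — is the delicate part, and it is where the pressure condition proposed in the spirit of Fang--Liu--Yuan \cite{FL} does the essential work.
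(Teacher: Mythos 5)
Your overall strategy is the same as the paper's: reduce to a uniformly elliptic system for $(p,\theta)$ using the transport of $s$ and the Bernoulli quantity, turn the shock polar relation into a uniformly oblique boundary condition on $\Gamma_s$ via the hypothesis $p\ge\bar p_*+\epsilon$, obtain $\sup|\theta|=O(\sigma)$ by maximum-principle arguments, convert this into $p-\bar p_+=O(\sigma)$ on $\Gamma_s$ through the Rankine--Hugoniot relations, propagate to $\Omega_+$ by the maximum principle with the exit Dirichlet datum, recover $q,s,\psi'$, and finish with weighted Schauder estimates. That is precisely the paper's six-step proof (the corner analysis is done there with explicit Lieberman--Miller barriers $\mathcal F_1,\mathcal F_2$ near $P_1,P_4$ and a strip barrier along $\Gamma_s$, rather than purely by weighted norms, but this is the same technology).

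There is, however, one step in your write-up that is circular as stated. You claim at the outset that the trace of $s$ on $\Gamma_s$ is ``fixed by the incoming supersonic state through \eqref{g3} and \eqref{spu}'' and is therefore $O(\sigma)$-close to $\bar s_+$. This is true for the Bernoulli function (which is continuous across the shock), but not for the entropy: \eqref{spu} gives $\rho_+$ on $\Gamma_s$ as a function of the \emph{downstream} pressure $p_+$, so $s_+=p_+/(R\rho_+^\gamma)$ depends on $p_+$ on $\Gamma_s$, which is exactly the quantity you are not allowed to assume close to $\bar p_+$ --- as you yourself emphasize two paragraphs later. All that can be asserted a priori is \emph{boundedness} of $s$ (and of $U$), which is what the paper establishes first and which suffices for uniform ellipticity; the $O(\sigma)$ estimate for $s$ is recovered only at the end, after $p$ on $\Gamma_s$ has been shown to be $O(\sigma)$-close. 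Your proposal should be amended to make this ordering explicit, otherwise the reduction to a $(p,\theta)$ system ``with coefficients close to background'' smuggles in the conclusion. A second, minor slip: the degeneracy of the polar relation $\theta=\theta_-+\Theta(p;U_-)$ is at the point $B$ (where $\theta$ is extremal and $\partial_p\Theta=0$), not at $A$; the hypothesis $p\ge\bar p_*+\epsilon$ keeps the state away from $B$, which is what makes $\partial_pH_1\ge\varepsilon_0>0$ and hence the boundary condition uniformly oblique. Near $A$ (the background normal shock) the inversion you propose is in fact harmless.
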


\begin{proof}

The proof is divided into six steps.

\emph{Step 1}: First, by applying the Bernoulli's equation \eqref{Conf3} and the R-H condition \eqref{g3}, one can deduce that
\begin{align}\label{boundedness1}
  \|u\|_{L^\infty(\Omega_+\cup \Gamma_s)} +  \|v\|_{L^\infty(\Omega_+\cup \Gamma_s)}+  \|c\|_{L^\infty(\Omega_+\cup \Gamma_s)}  \leq C_{01},
\end{align}
where the constant $C_{01}$ depends on $\overline{U}_-$, $U_0$, $\gamma$ and $L$.
Employing \eqref{boundedness1}, \eqref{zz0} and \eqref{op}-\eqref{spu} on the shock front, it follows that
$p$ and $\rho$ are bounded on the shock front, then by applying the state equation $p=A(s)\rho^{\gamma}$, one can deduce that $s$ is bounded on the shock front, then \eqref{Conf4} yields that
\begin{align}\label{boundedness2}
   \|s\|_{L^\infty(\Omega_+\cup \Gamma_s)} \leq C_{02},
\end{align}
where the constant $C_{02}$ depends on $\overline{U}_-$, $U_0$, $\gamma$ and $L$. In addition, by applying \eqref{boundedness1}, \eqref{boundedness2} and the relation $p = \frac{c^2 }{\gamma}(\frac{c^2}{\gamma A(s)})^{\frac{1}{\gamma-1}}$, one has
\begin{equation}
  \|p\|_{L^\infty(\Omega_+\cup \Gamma_s)} \leq C_{03},
\end{equation}
where the constant $C_{03}$ depends on $\overline{U}_-$, $U_0$, $\gamma$ and $L$.
Therefore,
\begin{align}
  \|U\|_{L^\infty(\Omega_+\cup \Gamma_s)} \leq C_{04},
\end{align}
where the constant $C_{04}$ depends on $\overline{U}_-$, $U_0$, $\gamma$ and $L$.
Thus, by standard elliptic estimates (cf.\cite{GI2011}) for $(p,\theta)$ and the method of characteristic (cf.\cite{LT1985}) for $(q,s)$, one can obtain
 \begin{align}\label{upperest}
  \|U\|_{(\Omega_+;P_i)} \leq C_{0},
\end{align}
where the constant $C_{0}$ depends on $\overline{U}_-$, $U_0$, $\gamma$ and $L$. Moreover, by \eqref{g4} and \eqref{upperest}, one has
\begin{align}\label{uppsi}
   \|\psi' \|_{\mcc^{0,\alpha} (\Gamma_s)}\leq C_{0s},
\end{align}
where the constant $C_{0s}$ depends on $C_{0}$, $\overline{U}_-$, $U_0$, $\gamma$ and $L$.

\emph{Step 2}: In this step, we derive the boundary value problem of elliptic equation with respect to $\theta$ in $\Omega_+$. From equations $\eqref{LC1}$ and \eqref{LC2}, one has
\begin{align}
&\partial_\xi p = \displaystyle\frac{\rho^2 q^3}{(1-M^2)\cos\theta}\partial_\eta \theta - \displaystyle\frac{\rho q^2 \sin\theta}{(1 - M^2)\cos\theta }\partial_\xi \theta,\label{eq180}\\
&\partial_\eta p= \displaystyle\frac{\rho q^2 \sin\theta}{(1- M^2)\cos\theta}\partial_\eta \theta - \displaystyle\frac{q(1-M^2\cos^2\theta)}{(1-M^2)\cos\theta}\partial_\xi \theta,\label{eq140}
\end{align}
which can be rewritten as the following form:
\begin{equation}\label{eq19}
  Dp = \displaystyle\frac{1}{(1-M^2)\cos\theta}K D\theta,
\end{equation}
where the matrix
\[ K = \begin{pmatrix}
   -\rho q^2 \sin\theta& \rho^2 q^3\\
  -q(1-M^2\cos^2\theta)& \rho q^2 \sin\theta \\
\end{pmatrix} .\]
Applying the assumptions $M^2\leq 1-\epsilon$ and $u>0$ in the subsonic domain behind the shock front in Theorem \ref{thm2}, it is easy to see that $\det K = \rho^2 q^4( 1- M^2)\cos^2\theta \neq 0$.
Taking $(\partial_\eta, -\partial_\xi)$ on the both sides of $\eqref{eq19}$, one can obtain a second order elliptic equation of $\theta$ in the divergence form as below:
\begin{equation}\label{eq23}
 \mathcal{N} \theta := \sum_{k,j = 1}^{2} \partial_j(a_{kj}(U) \partial_k \theta)= 0,
\end{equation}
where
\begin{align*}
  a_{11}(U) =& \displaystyle\frac{q(1-M^2 \cos^2\theta)}{(1-M^2)\cos\theta},\quad
  a_{12} (U)= a_{21}(U) = - \displaystyle\frac{\rho q^2\sin\theta}{(1 - M^2)\cos\theta},\\
  a_{22}(U) =& \displaystyle\frac{\rho^2 q^3}{(1-M^2)\cos\theta},
\end{align*}
with the property
\begin{align}\label{eq25}
  &a_{11}(U)\varsigma_1^2 + 2 a_{12}(U)\varsigma_1 \varsigma_2 + a_{22}(U)\varsigma_2^2\notag\\
   =& \displaystyle\frac{1}{(1-M^2)\cos\theta} \notag\\
   &\cdot \Big\{ \Big(\sqrt{q(1-M^2\cos^2\theta)} \varsigma_1 - \displaystyle\frac{\rho q^2 \sin\theta}{\sqrt{q(1-M^2\cos^2\theta)}}\varsigma_2\Big)^2 +  \displaystyle\frac{\rho^2 q^3 (1-M^2)\cos^2\theta}{1-M^2\cos^2\theta}\varsigma_2^2 \Big\}\notag\\
  \geq& \lambda (\varsigma_1^2 + \varsigma_2^2),
\end{align}
where the positive constant $\lambda > 0$ depends on $\epsilon$, and $\varsigma=(\varsigma_1, \varsigma_2)\in \mathbb{R}^2\backslash{(0,0)}$.
Moreover, on the exit $\Gamma_3$, one has
\begin{equation}\label{eq31}
  \partial_{\vec{\tau}_3} p= \sigma \partial_{\vec{\tau}_3} P_e =0,
\end{equation}
where $\vec{\tau}_3 = (0,1)$ is the unit tangential vector of the boundary $\Gamma_3$.
By employing \eqref{eq19}, one has
\begin{equation}\label{eq32}
   0=  \partial_{\vec{\tau}_3} p= \displaystyle\frac{1}{(1-M^2)\cos\theta} {\vec{\tau}_3}\cdot  K D\theta\defs  \vec{\ell}_e \cdot D\theta,
\end{equation}
where
$$\vec{\ell}_e \defs  \displaystyle\frac{1}{(1-M^2)\cos\theta} \big(-q (1-M^2 \cos^2\theta), \rho q^2 \sin\theta\big).$$
Further calculation yields that
\begin{align}\label{reexit}
 \mathcal{A}_e \partial_{\vec{n}_3} \theta + \mathcal{B}_e \partial_{\vec{\tau}_3} \theta = 0,\quad \text{on} \quad \Gamma_3
\end{align}
where ${\vec{n}_3} = (-1,0)$ is the unit inner normal vector of $\Gamma_3$, and
\begin{align}
  &\mathcal{A}_e  =\vec{\ell}_e \cdot \vec{n}_3 = \displaystyle\frac{q( 1-M^2\cos^2\theta)}{(1-M^2)\cos\theta}  > 0,\\
   &\mathcal{B}_e  = \vec{\ell}_e \cdot \vec{\tau}_3 = \displaystyle\frac{\rho q^2 \sin\theta}{(1-M^2)\cos\theta}.
\end{align}
On shock front $\Gamma_s$, \eqref{op} can be rewritten as
\begin{align}\label{112}
  H_1(\theta,p;U_-) = 0.
\end{align}
It is easy to see that $(\partial_{\theta} H_1, \partial_{p} H_1)$ is the normal vector of the level set $H_1=0$ in the $(\theta,p)$-plane (see Figure \ref{fig:2}). If $\theta$ attains its maximum $\theta_*$, then $p_*$ can be expressed by $U_-$, let
\begin{align}
  p_* = H_2(U_-).
\end{align}
It yields that
\begin{align}
  p_* = H_2(\overline{U}_-) + \nabla_{U_-} H_2 \cdot ( U_- - \overline{U}_-) = \bar{p}_* + O(1)\sigma,
\end{align}
where we use \eqref{zz0}, and $O(1)$ is a bounded function.
Thus, for sufficiently small constant $\sigma>0$, one has
\begin{align}
  p>\bar{p}_* + \epsilon > {p}_*, \quad \text{on}\quad \Gamma_s,
\end{align}
which yields that
\begin{align}\label{varepsilon0eq}
  \partial_p H_1 \geq \varepsilon_0 >0,
\end{align}
where the constant $\varepsilon_0$ depends on $\epsilon$ and $\overline{U}_\pm$.
Moreover, by taking the tangential derivative on both sides of equation \eqref{112}, one has
\begin{align}\label{deffs}
   \partial_{\theta} H_1\cdot  \partial_{\vec{\tau}_s} \theta + \partial_{p} H_1 \cdot \partial_{\vec{\tau}_s} p = - \nabla_{U_-} H_1 \cdot \partial_{\vec{\tau}_s} U_- : =  f_s,
\end{align}
where ${\vec{\tau}_s} = \displaystyle\frac{(-\psi',-1)}{\sqrt{1+ (\psi')^2}}$.
Furthermore, applying \eqref{eq19}, then \eqref{deffs} implies that
\begin{align}\label{fs1}
  \partial_{\theta} H_1\cdot  \partial_{\vec{\tau}_s} \theta + \displaystyle\frac{\partial_p H_1}{(1-M^2)\cos\theta} {\vec{\tau}_s}\cdot  K \cdot D\theta =  f_s,
\end{align}
which can be rewritten as
\begin{align}\label{ww}
  \vec{\ell}_s \cdot D\theta= f_s,
\end{align}
where
\begin{align}
  \vec{\ell}_s =&  \partial_{\theta} H_1\cdot {\vec{\tau}_s} +  \displaystyle\frac{\partial_p H_1}{(1-M^2)\cos\theta} {\vec{\tau}_s}\cdot  K  = \displaystyle\frac{1}{\sqrt{1+ (\psi')^2}}( {\ell}_s^{(1)}, \,{\ell}_s^{(2)}),
\end{align}
and
\begin{align}
   {\ell}_s^{(1)} =&-\psi' \partial_{\theta} H_1 + \frac{q(\psi'\rho q \sin\theta + 1- M^2 \cos^2 \theta)\cdot \partial_p H_1}{(1-M^2)\cos\theta},\label{ell1}\\
    {\ell}_s^{(2)} =& -\partial_{\theta} H_1 - \frac{q(\psi'\rho^2 q^2 + \rho q \sin\theta)\cdot \partial_p H_1}{(1-M^2)\cos\theta}\label{ell2}.
\end{align}
In addition, \eqref{fs1} can be rewritten as
\begin{equation}\label{eq29}
     \mathcal{A}_s\partial_{\vec{n}_s} \theta+ \mathcal{B}_s\partial_{\vec{\tau}_s} \theta = f_s,\quad \text{ on} \quad \Gamma_s,
\end{equation}
where ${\vec{n}_s}=\displaystyle\frac{(1, -\psi')}{\sqrt{1+ (\psi')^2}}$, and
     \begin{align}
 \mathcal{A}_s: =&\frac{q\Big( (\psi' \rho q + \sin\theta)^2 + (1-M^2 )\cos^2\theta   \Big) \partial_p H_1}{(1+(\psi')^2)(1-M^2)\cos\theta},\label{eq33}\\
 \mathcal{B}_s: =& \partial_{\theta} H_1 + \frac{q\Big(\rho q  (1 - (\psi')^2)\sin\theta + \psi' (\rho^2 q^2 -1 + M^2 \cos^2\theta)\Big) \partial_p H_1}{(1+(\psi')^2)(1-M^2)\cos\theta}.\label{eq34}
\end{align}
By applying \eqref{varepsilon0eq}, one has
  \begin{align}\label{As}
    \mathcal{A}_s \geq \varepsilon_1>0,
  \end{align}
where the constant $\varepsilon_1 >0 $ depends on $\epsilon$, $\varepsilon_0$ and $\overline
{U}_\pm$.
Moreover, on the nozzle walls $\Gamma_2$ and $\Gamma_4$, one has
\begin{align}
	&\theta= 0, \quad &\text{on} &\quad &\Gamma_2 \cap\overline{\Omega_+},\label{14l}\\
&\theta = \sigma, \quad &\text{on} &\quad &\Gamma_4 \cap\overline{\Omega_+}.\label{15u}
	\end{align}

\emph{Step 3}:
In this step, we will establish the \emph{a priori} estimates for $\theta$.
First, applying the strong maximum principle, the minimum and maximum values of $\theta$ only attain on the boundaries of $\Omega_+$. On the exit $\Gamma_3$, employing the boundary condition \eqref{reexit} and the Hopf's lemma, $\theta$ cannot attain its maximum or minimum values on the boundary $\Gamma_3$. On the boundaries $\Gamma_2$ and $\Gamma_4$, $\theta$ satisfies the conditions \eqref{14l}-\eqref{15u}. In addition, on the free boundary $\Gamma_s$, the oblique derivative boundary condition \eqref{eq29} holds. Therefore, it suffices to analyze the values of $\theta$ in the neighborhood of the boundary $\Gamma_s$.
First, we consider the domain near the point $P_4 = ({\xi}_*,\eta_0)$ (see Figure \ref{fig:3L}), where ${\xi}_*\in (0,L)$ is a constant.
Let
\begin{align}
\xi - {\xi}_* = r \cos\theta,\quad
\eta - \eta_0  = r\sin\theta.
\end{align}
For a constant $r_0\in\mathbb{R}$ with $0<r_0<\frac{1}{2}$, define
\begin{align}
  \Omega_1(r_0): =& \{(\xi,\eta)\in \Omega_+: |(\xi,\eta) - P_4|< r_0\},\\
  \Gamma_N (r_0) \defs&  \Gamma_s\cap \partial  \Omega_1(r_0),\\
  \Gamma_D (r_0)  \defs& \Gamma_4\cap \partial  \Omega_1(r_0).
\end{align}
In light of the method in \cite{GM86, GM13, Miller}, let
\begin{align}
  \mathcal{F}_1 = \|f_s\|_{L^\infty(\Gamma_s)} \cdot r^\beta F(\theta)  + \sigma,\quad \text{for}\quad \beta\in(0,1),\quad \theta\in (\theta_1,\theta_2),
\end{align}
where $\theta_1:=\frac32 \pi +\arctan{\psi'}(\eta_0)$ and $\theta_2 := 2\pi +\arctan{\psi'}(\eta_0)$, and $F(\theta)$ will be determined later.
By \eqref{eq23}, direct calculations yield that
\begin{align}\label{bar}
  \mathcal{N}\mathcal{F}_1 = &\|f_s\|_{L^\infty(\Gamma_s)}\cdot \Big\{r^{\beta -2} \Big(a_1 \beta (\beta-1)F + 2a_2 (\beta -1)F' + a_3 (F'' + \beta F)\Big)\notag\\
   &\qquad\qquad \qquad  + r^{\beta-1}\Big(a_4 F' + a_5 \beta F\Big)\Big\},
\end{align}
where
\begin{align}
  a_1 = a_{kj} \eta_k \eta_j,\,\, a_2 = a_{kj} \xi_k \eta_j,\,\,
  a_3 = a_{kj} \xi_k \xi_j, \,\,  a_4 = \partial_j(a_{kj}) \xi_k,\,\, a_5 = \partial_j(a_{kj}) \eta_k,
\end{align}
with $k,j=1,2$, $\eta_k = \frac{x_k}{r}$, $\xi_1 = -\sin\theta$, $\xi_2 = \cos\theta$.
Next, on the boundary $\Gamma_N (r_0)$, one has
\begin{align}\label{BD}
 \vec{\ell}_s \cdot D\mathcal{F}_1 =&\displaystyle\frac{1}{\sqrt{1+ (\psi')^2}} \|f_s\|_{L^\infty(\Gamma_s)}\cdot  r^{\beta -1}\notag\\
 &\cdot \Big\{\big(\ell_s^{(1)} \cos\theta + \ell_s^{(2)} \sin\theta\big)\beta F + \big( \ell_s^{(2)} \cos\theta -\ell_s^{(1)} \sin\theta  \big) F'  \Big\},
\end{align}
where $(\ell_s^{(1)}, \ell_s^{(2)})$ defined in \eqref{ell1}-\eqref{ell2}.
Let
\begin{align}
  F(\theta) = 1 - G \exp({H\theta}),
\end{align}
where $G = \exp(-H\theta_2 -1)$, $H = 1+\displaystyle\frac{4\|a_2\|_{L^\infty}}{\lambda}$, and $\lambda$ is defined in \eqref{eq25}.
Obviously, $0<F<1$, $F' = -GH \exp(H\theta)<0$, and $F'' = -G H^2 \exp(H\theta)<0$.
In addition, it is easy to see that $a_1 \beta (\beta-1)F <0$,
\begin{align}\label{2a2eq}
  2a_2 (\beta -1)F'\leq 2\|a_2\|_{L^\infty} \cdot |F'| = 2\|a_2\|_{L^\infty}\cdot  GH \exp(H\theta),
\end{align}
and
\begin{align}\label{a3eq}
  &a_3(F''+\beta F)\leq  \lambda (F''+\beta F)\notag\\
  =& \lambda \Big(-GH^2 \exp(H\theta) + \beta (1 - G \exp(H\theta))   \Big)\notag\\
  \leq& \lambda \Big(-GH^2 \exp(H\theta) + \beta  \Big)\notag\\
  =& - GH\exp(H\theta)\lambda\Big(1+  \frac{4\|a_2\|_{L^\infty}}{\lambda}\Big) +\beta \lambda\notag\\
  =& -4\|a_2\|_{L^\infty} GH\exp(H\theta) + \lambda \Big(\beta - GH\exp(H\theta)\Big).
\end{align}
Let $\beta_0 =GH \exp(H\theta_1 -1)$. For $0< \beta\leq \beta_{0}$ and  sufficiently small $r_0>0$, applying \eqref{2a2eq} and \eqref{a3eq}, then \eqref{bar} yields that $\mathcal{N} \mathcal{F}_1 < 0$. Moreover, by applying \eqref{upperest}, one can deduce that
$\partial_j(a_{kj})$ are bounded in $\Omega_+$. Thus, $|\vec{\ell}_s|$ is bounded. One may assume that $|\vec{\ell}_s|\leq {C}_s$ for some positive constant ${C}_s$. Therefore,
\begin{align}
 \displaystyle\frac{(\ell_s^{(1)} \cos\theta + \ell_s^{(2)} \sin\theta)}{\sqrt{1+ (\psi')^2}}   \leq {C}_s.
\end{align}
In addition,
\begin{align}
 \frac{(\ell_s^{(2)} \cos\theta -\ell_s^{(1)} \sin\theta )}{\sqrt{1+ (\psi')^2}} =\frac{(\ell_s^{(1)}, \ell_s^{(2)})}{\sqrt{1+ (\psi')^2}} \cdot (- \sin\theta ,  \cos\theta)= \vec{\ell}_s \cdot \vec{n} +  \vec{\ell}_s (\vec{\nu}  - \vec{n}),
\end{align}
where $\vec{\nu} =(-\sin\theta, \cos\theta)$, and $\vec{n}$ is the inner normal vector of the boundary $\Gamma_N (r_0)$.
When $r_0>0$ is sufficiently small, one has
\begin{align}
  |\vec{\nu}  - \vec{n} |\leq \delta,
\end{align}
where $\delta > 0$ is small and will be determined below.
Applying \eqref{As}, it holds that
\begin{align}
     \vec{\ell}_s \cdot \vec{n}+  \vec{\ell}_s(\vec{\nu} - \vec{n})\geq \varepsilon_1- | \vec{\ell}_s||\vec{\nu} - \vec{n}|\geq \varepsilon_1- {C}_s\delta.
\end{align}
Let $\delta = \frac{\varepsilon_1}{4{C}_s}$, then one has $ \vec{\ell}_s \cdot \vec{n}+  \vec{\ell}_s(\vec{\nu} - \vec{n})\geq \frac34 \varepsilon_1$.
Therefore, \eqref{BD} implies that
\begin{align}
  \vec{\ell}_s \cdot D\mathcal{F}_1 \leq   \|f_s\|_{L^\infty(\Gamma_s)}\cdot r^{\beta -1}\Big({C}_s \beta - \frac34 \varepsilon_1 GH\Big).
\end{align}
Let $\beta_1 = \displaystyle\frac{\varepsilon_1 GH}{2 {C}_s}$. Denote $\beta_* = \min\{1, \beta_0, \beta_1\}$. For any $0< \beta < \beta_*$, when $r_0>0$ is sufficiently small, one has
\begin{align}
  &\mathcal{N} \mathcal{F}_1 < 0,&\quad &\text{in} \quad \Omega_1(r_0)\\
  &\vec{\ell}_s \cdot  D\mathcal{F}_1 \leq -\|f_s\|_{L^\infty(\Gamma_s)},&\quad &\text{on}\quad \Gamma_N(r_0)\\
  &\mathcal{F}_1 \geq \sigma.&\quad &\text{on} \quad \Gamma_D (r_0)
\end{align}
Therefore, employing the comparison principle and \eqref{zz0}, \eqref{deffs} as well as \eqref{uppsi}, it follows that
\begin{align}\label{eq111}
  \sup\limits_{\overline{\Omega_1(r_0)}}|\theta|\leq \mathcal{F}_1 \leq C(\|f_s\|_{L^\infty(\Gamma_s)} + \sigma)\leq C_1^\sharp \cdot \sigma,
\end{align}
where the constant $ C_1^\sharp$ depends on $C_-, \overline{U}_-$
, $C_0$ and $C_{0s}$.
Then, similarly as done for $\mathcal{F}_1$, one can construct $\mathcal{F}_2$ in the neighborhood of point $P_1$, and obtain that
\begin{align}\label{eq111}
  \sup\limits_{\overline{\Omega_2(r_0)}}|\theta|\leq \mathcal{F}_2 \leq C(\|f_s\|_{L^\infty(\Gamma_s)} + \sigma)\leq C_2^\sharp \cdot \sigma,
\end{align}
where the constant $ C_2^\sharp$ depends on $C_-, \overline{U}_-$
, $C_0$ and $C_{0s}$.
Moreover, define
\begin{align*}
  \Omega_Q(\iota): = \{(\xi, \eta)\in {\mathbb{R}}^2:\psi(\eta)<\xi < \psi(\eta) + \iota, 0<\eta<\eta_0\}\backslash\{\Omega_1(r_0)\cup\Omega_2(r_0) \},
  \end{align*}
  where $\iota>0$ is a small constant.
Now we consider \eqref{eq23} in the domain $\Omega_Q(\iota)$. First, the equation \eqref{eq23} can be rewritten as the following form:
\begin{equation}
 \mathcal{N} \theta := \sum_{k,j = 1}^{2} a_{kj}(U) \theta_{kj}
  +  b_k(U) \theta_k = 0,
\end{equation}
where $b_k\defs \sum\limits_{j=1}^2 \partial_j (a_{kj}(U))$.
Let
\begin{align}
  \omega = M_1 - M_2\exp(\mu (\xi-\psi(\eta))),
\end{align}
where $M_1, M_2, \mu$ are positive constants.
It is easy to check that
\begin{align}
  &\sum_{k,j = 1}^{2}a_{kj}(U) \omega_{kj} + b_k(U) \omega_k\notag\\
  =&- M_2\mu\exp(\mu (\xi-\psi(\eta)))\notag\\
   &\cdot \Big(\mu \big(a_{11} - 2a_{12} \psi'(\eta) + a_{22} (\psi'(\eta))^2 \big)-a_{22} \psi''(\eta) + b_1 - b_2\psi'(\eta)   \Big)\notag\\
   \leq & - M_2\mu\exp(\mu (\xi-\psi(\eta)))\Big( \mu \lambda (1+(\psi')^2) - \Lambda|\psi''(\eta)| - |b_1| - |b_2||\psi'(\eta)|  \Big)\notag\\
   \leq & -C_1<0,
\end{align}
if $\mu$ large and $M_2$ small depending on $\frac{\Lambda}{\lambda}$ and $C_0$. Thus, choosing $M_1$ large such that
\begin{align}
  \omega >0.
\end{align}
In addition,
\begin{align}
   \vec{\ell}_s \cdot D \omega =& - M_2\mu \exp(\mu(\xi-\psi(\eta))) \cdot \vec{\ell}_s \cdot(1, -\psi'(\eta)) \notag\\
  =& - \sqrt{1+(\psi'(\eta))^2} M_2\mu \exp(\gamma (\xi-\psi(\eta))) \cdot\vec{\ell}_s \cdot n_s  \notag\\
  \leq& -C_2 <0.
\end{align}
Let $\theta = \upsilon\cdot\omega$. Then we consider
\begin{align}
   \mathcal{N} \upsilon\defs a_{ij} \upsilon_{ij} + \frac{a_{ij}\omega_j + b_i \omega}{\omega} \upsilon_i + \frac{a_{ij} \omega_i}{\omega}\upsilon_j +
  \frac{a_{ij} \omega_{ij} + b_i \omega_i }{\omega} \upsilon = 0,\quad \text{in} \quad \Omega_Q(\iota).
\end{align}
On the boundary $\Gamma_s\backslash \{P_1, P_4\}$, one has
\begin{align}
  \upsilon = \frac{f_s}{\vec{\ell}_s \cdot D\omega} - \frac{\vec{\ell}_s \cdot D \upsilon}{ \vec{\ell}_s \cdot D\omega} \omega.
\end{align}
Thus, by maximum principle, $\upsilon$ cannot attain its maximum at the interior of $\Omega_Q(\iota)$. Then, if $\upsilon$ attains its maximum at $Q\in \Gamma_s\backslash \{P_1, P_4\}$, one can obtain
\begin{align}
  \upsilon(Q) = \frac{f_s}{\vec{\ell}_s \cdot D\omega}(Q)- \frac{\vec{\ell}_s \cdot D \upsilon}{\vec{\ell}_s \cdot D\omega} \omega(Q) \leq \frac{f_s}{\vec{\ell}_s \cdot D\omega}(Q)\leq \frac{|f_s|}{C_2}.
\end{align}
Then one has
\begin{align}
 \sup\limits_{\overline{\Omega_Q(\iota)}} |\upsilon|\leq \frac{|f_s|}{C_2}.
\end{align}
Thus
\begin{align}\label{OmegaQiota}
 \sup\limits_{\overline{\Omega_Q(\iota)}} |\theta|\leq M_1\frac{|f_s|}{C_2}.
\end{align}
Therefore, applying \eqref{eq111}, \eqref{OmegaQiota} and \eqref{14l}-\eqref{15u}, it follows that
\begin{align}\label{eq111eq}
  \sup\limits_{\overline{\Omega}_+}|\theta| \leq C\cdot(\|f_s\|_{L^\infty(\Gamma_s)} + \sigma)\leq C_0^\sharp \cdot \sigma,
\end{align}
where the constant $ C_0^\sharp$ depends on $C_-, \overline{U}_-$, $C_0$ and $C_{0s}$.

\emph{Step 4}: In this step, we will establish the \emph{a priori} estimates for $p$.
Applying \eqref{g4} and \eqref{eq111eq}, it follows that
\begin{align}\label{psi}
\sup\limits_{\Gamma_s}|\psi'| \leq C_2^\sharp \cdot \sigma,
\end{align}
where the constant $ C_2^\sharp$ depends on $C_1^\sharp, C_-, \overline{U}_-$, $C_0$ and $C_{0s}$.
Then, we rewrite the R-H conditions \eqref{g0}-\eqref{g4} as
\begin{align}
   U = H_3(U_- , \psi'), \quad \text{on} \quad \Gamma_s.
\end{align}
Furthermore, it follows from \eqref{zz0} and \eqref{psi} that
  \begin{align}
    U &= H_3(U_- , \psi') = H_3(\overline{U}_-, 0) + \nabla_{U_-} H_3(\overline{U}_- , 0)(U_- - \overline{U}_-) + \nabla_{\psi'} H_3(\overline{U}_- , 0)\psi'\notag\\
     & = \overline{U}_+ + O(1)\sigma,
  \end{align}
  where the bounded function $O(1)$ depends on $C_-, \overline{U}_-$ and $C_2^\sharp$. Thus, one has
  \begin{align}\label{esgammas}
    U - \overline{U}_+ = O(1)\sigma,\quad \text{on}\quad \Gamma_s.
  \end{align}
Moreover, by \eqref{eq19}, one has
\begin{equation}\label{op3}
D\theta =(1-M^2)\cos\theta \cdot K^{-1} \cdot Dp.
\end{equation}
Taking $(\partial_{\eta}, -\partial_{\xi})$ on both sides of $\eqref{op3}$, one can get the second order elliptic equation of $p$ in the divergence form. By applying the strong maximum principle, one can deduce that $p$ cannot attain its minimum and its maximum values in $\Omega_+$.
In addition, the boundary conditions on $\Gamma_2$ and $\Gamma_4$ can be transformed into
\begin{align}
&\partial_{\vec{n}_2} p - \frac{\sin\theta}{\rho q} \partial_{\vec{\tau}_2}p  =0,\quad \text{on}\quad \Gamma_2\label{pgamma2}\\
&\partial_{\vec{n}_4} p - \frac{\sin\theta}{\rho q} \partial_{\vec{\tau}_4}p =0,\quad \text{on}\quad \Gamma_4\label{pgamma4}
\end{align}
where $\vec{\tau}_2 =(1,0), \vec{n}_2 = (0,1),\vec{\tau}_4 =(-1,0), \vec{n}_4 = (0,-1)$.
Thus, applying the Hopf's lemma, \eqref{pgamma2}-\eqref{pgamma4} yield that $p$ cannot attain its maximum or minimum values on the boundaries $\Gamma_2$ and $\Gamma_4$. Therefore,
\begin{align}\label{ps}
  \sup\limits_{\overline{\Omega}_+}|p-\bar{p}_+|\leq& \sup\limits_{\Gamma_s}|p-\bar{p}_+| + \sup\limits_{\Gamma_3}|p-\bar{p}_+| \notag\\
  =&  \sup\limits_{\Gamma_s}|p-\bar{p}_+| + \sigma  |P_e| \leq C_3^\sharp\cdot \sigma + \sigma |P_e|,
\end{align}
where the constant $C_3^\sharp$ depends on $C_-, \overline{U}_-$ and $C_2^\sharp$.

\emph{Step 5}: On $\Gamma_s$, it follows from the condition \eqref{esgammas} that $s-\bar{s}_+ = O(1)\sigma$ and $q - \bar{q}_+ = O(1)\sigma$. Then by employing \eqref{Conf4}, one has
\begin{align}\label{sseq}
  \sup\limits_{\overline{\Omega}_+}|s-\bar{s}_+| \leq C_4^\sharp \cdot \sigma,
\end{align}
where the constant $C_4^\sharp$ depends on $C_-, \overline{U}_-$ and $C_2^\sharp$.
In addition, from the equation \eqref{Conf3}, one can deduce that
\begin{align}\label{Bereq}
 (\frac12 q^2 + i)(\xi,\eta) =(\frac12 q^2 + i)(\psi(\eta),\eta).
\end{align}
Applying \eqref{ps} and \eqref{sseq}, then \eqref{Bereq} yields that
\begin{align}
  \sup\limits_{\overline{\Omega}_+}|q-\bar{q}_+| \leq C_5^\sharp \cdot \sigma,
\end{align}
where the constant $C_5^\sharp$ depends on $C_-, \overline{U}_\pm$, $C_i^\sharp$, $(i=2,3,4)$, $C_0$ and $C_{0s}$.

\emph{Step 6}: In this step, we raise the regularity of the solution $U$.

By applying \eqref{eq111eq}, \eqref{ps}, and Theorem 8.29 in \cite{GI2011}, one can obtain
\begin{align}\label{w}
 \|p-\bar{p}_+\|_{1,\alpha;\Omega_+}^{(-\alpha;\{P_i\})}+  \|\theta\|_{1,\alpha;\Omega_+}^{(-\alpha;\{P_i\})}
 \leq C_6^\sharp \Big( \sup\limits_{\overline{\Omega}_+}|p-\bar{p}_+|
   + \sup\limits_{\overline{\Omega}_+}|\theta|\Big)\leq C_7^\sharp\sigma.
\end{align}
Applying \eqref{w} and \eqref{zz0}, it follows that
\begin{align}\label{qqs}
   \|q-\bar{q}_+\|_{1,\alpha;\Omega_+}^{(-\alpha;\{\overline{\Gamma_2^+}\cup \overline{\Gamma_4^+}\})}+  \|s-\bar{s}_+\|_{1,\alpha;\Omega_+}^{(-\alpha;\{\overline{\Gamma_2^+}\cup \overline{\Gamma_4^+}\})}\leq C_8^\sharp\sigma.
\end{align}
In addition, by employing \eqref{w}, \eqref{qqs} and \eqref{6}, one has
\begin{align}
   \|\psi' \|_{\mcc^{0,\alpha}(\Gamma_s)}\leq C_9^\sharp\sigma.
\end{align}
Thus, the proof of Lemma \ref{elliptic} is completed.
\end{proof}

\section{Uniqueness of the transonic shock solutions}
Based on the \emph{a priori} estimates in Section 2,
in this section, we are going to show that any piecewise smooth shock solution coincides with the one given in Theorem \ref{thm1:existence} via contraction arguments. It is worth pointing out that one of the key steps in the arguments is to prove the uniqueness of intersection points between the shock front and the nozzle walls.

\subsection{Some properties of supersonic solution $U_-$ ahead of the shock front}
Let $$\psi(\eta_0)\defs \xi_* = \xi_0 + \delta \xi\in (0,L),$$
where $\xi_0\in(0,L)$ is a fixed point and $\delta \xi\in (-\xi_0, L-\xi_0)$ is an unknown point.
Then $\psi(\eta)$ can be rewritten as the following form:
 \begin{align}
   \psi(\eta) = \xi_* - \int_{\eta}^{\eta_0} \psi'(\tau)\dif \tau.
 \end{align}
Denote
\begin{align}
\Omega_{\xi_*}^{-}\defs \{(\xi, \eta)\in \mathbb{R}^2 : 0 < \xi < \xi_*, \,0 < \eta < \eta_0 \}.
\end{align}
Let ${U}_- = \delta {U}_- + \overline{U}_-$ be the supersonic solution in the domain $\Omega_{\xi_*}^{-}$. Then
$\delta {U}_-:=(\delta p_-,\delta \theta_-,  \delta q_-, \delta s_-)$ satisfies the following  equations in $\Omega_{\xi_*}^{-}$:
\begin{align}
  &\partial_\eta \delta p_- + \bar{q}_- \partial_\xi \delta \theta_- = f_1^-,\label{u1-}\\
  &\partial_\eta \delta \theta_- - \frac{1}{\bar{\rho}_-\bar{q}_-}
  \frac{1-\bar{M}_-^2}{\bar{\rho}_-\bar{q}_-^2}\partial_\xi \delta p_- = f_2^-,\label{u2-}\\
  & \partial_\xi (\bar{\rho}_-\bar{q}_- \delta q_- + \delta p_-)=f_3^-,\label{u3-}\\
  & \partial_\xi \delta s_- =0,\label{u4-}
\end{align}
where
\begin{align}
  f_1^- \defs&\frac{\sin\theta_-}{\rho_- q_-}\partial_\xi p_- - (q_- \cos\theta_- \partial_\xi \theta_- - \bar{q}_- \partial_\xi \delta \theta_- ),\\
  f_2^-\defs&\frac{\sin\theta_-}{\rho_- q_-}\partial_\xi \theta_-
  + \Big(\frac{\cos\theta_-}{\rho_- q_-}\frac{1-M_-^2}{\rho_- q_-^2}
  \partial_\xi p_- - \frac{1}{\bar{\rho}_-\bar{q}_-}
  \frac{1-\bar{M}_-^2}{\bar{\rho}_-\bar{q}_-^2} \partial_\xi \delta p_-   \Big),\\
  f_3^-\defs& -(\rho_- q_- \partial_\xi q_- - \bar{\rho}_-\bar{q}_- \partial_\xi \delta q_-).\label{supf3}
\end{align}
By \eqref{eq808}-\eqref{eq909}, it follows that
\begin{align}
	\delta U_-=& \sigma {U}_0(Y_0(\eta)),&\quad& \text{on} \quad  \Gamma_1,\label{12t}\\
	\delta \theta_- =& 0, &\quad &\text{on} \quad \Gamma_2 \cap\overline{\Omega_{\xi_*}^{-}},\\
\delta \theta_- =& \sigma, &\quad &\text{on} \quad \Gamma_4 \cap\overline{\Omega_{\xi_*}^{-}}.\label{zzt}
	\end{align}
Then one has the following lemma.
\begin{lem}\label{lem1}
In $\Omega_{\xi_*}^{-}$, the solution $\delta U_-$ to equations $\eqref{u1-}$-$\eqref{u4-}$, with the initial-boundary conditions \eqref{12t}-\eqref{zzt} satisfies the following relations:
\begin{align}
&\delta s_-(\xi, \eta) =  \sigma s_0 (Y_0(\eta)),\label{deltas-}\\
&(\bar{q}_- \delta q_- +\frac{1}{\bar{\rho}_-}\delta p_-)(\xi,\eta) = \sigma\bar{q}_-  q_0 (Y_0(\eta)) +\sigma \frac{1}{\bar{\rho}_-}p_0 (Y_0(\eta)) + \int_{0}^{\xi}f_3^-(\tau,\eta)\dif \tau.\label{deltaq-}
\end{align}
Moreover, for $\xi_* \in (0, L)$, it holds that
\begin{align}\label{pl0}
  &\frac{\bar{M}_-^2-1}{\bar{\rho}_- \bar{ q}_-^2}\int_{0}^{\eta_0}\delta p_- (\xi_* , \eta)\dif \eta\notag\\
   =& -  \sigma\bar{\rho}_- \bar{q}_-\xi_*
    + \sigma \frac{\bar{M}_-^2-1}{\bar{\rho}_- \bar{ q}_-^2}\int_{0}^{\eta_0} p_0\dif \eta + \bar{\rho}_-\bar{q}_- \int_0^{\eta_0}\int_0^{\xi_*} f_2^- \dif \xi \dif \eta.
\end{align}
\end{lem}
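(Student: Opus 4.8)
The three identities are all exact integral consequences of the linearized supersonic system \eqref{u1-}--\eqref{u4-} together with the entrance and wall data \eqref{12t}--\eqref{zzt}; no fixed-point argument or estimate is involved. The plan is simply to integrate the relevant equation in the appropriate direction over the rectangle $\Omega_{\xi_*}^- = (0,\xi_*)\times(0,\eta_0)$ and substitute the prescribed boundary values. The flatness of $\Omega_{\xi_*}^-$ is exactly what makes the boundary bookkeeping clean, since each of its four edges carries precisely one of the conditions \eqref{12t}--\eqref{zzt}.

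First I would dispose of \eqref{deltas-} and \eqref{deltaq-}, which are one-dimensional integrations in $\xi$. Equation \eqref{u4-} reads $\partial_\xi \delta s_- = 0$, so $\delta s_-$ is independent of $\xi$; evaluating at the entrance $\xi = 0$ and using \eqref{12t} gives $\delta s_-(\xi,\eta) = \delta s_-(0,\eta) = \sigma s_0(Y_0(\eta))$, which is \eqref{deltas-}. For \eqref{deltaq-} I would integrate \eqref{u3-} in $\xi$ from $0$ to $\xi$ to obtain $(\bar{\rho}_-\bar{q}_- \delta q_- + \delta p_-)(\xi,\eta) = (\bar{\rho}_-\bar{q}_- \delta q_- + \delta p_-)(0,\eta) + \int_0^\xi f_3^-(\tau,\eta)\dif\tau$, then substitute the entrance values $\delta q_-(0,\eta) = \sigma q_0(Y_0(\eta))$, $\delta p_-(0,\eta) = \sigma p_0(Y_0(\eta))$ from \eqref{12t} and normalize to reach the stated form.

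The only step with genuine structure is \eqref{pl0}, and this is where I would take most care. I would integrate \eqref{u2-} over the full rectangle $\Omega_{\xi_*}^-$, handling its two derivative terms by Fubini in opposite orders. For the $\partial_\eta \delta\theta_-$ term I integrate first in $\eta$, giving $\int_0^{\xi_*}\big(\delta\theta_-(\xi,\eta_0) - \delta\theta_-(\xi,0)\big)\dif\xi$; the wall conditions \eqref{zzt} make the integrand exactly $\sigma - 0 = \sigma$, so this term contributes $\sigma\xi_*$. For the $\partial_\xi \delta p_-$ term I integrate first in $\xi$, giving $\int_0^{\eta_0}\big(\delta p_-(\xi_*,\eta) - \delta p_-(0,\eta)\big)\dif\eta$ with $\delta p_-(0,\eta) = \sigma p_0(Y_0(\eta))$ by \eqref{12t}, which isolates the target boundary integral $\int_0^{\eta_0}\delta p_-(\xi_*,\eta)\dif\eta$. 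Collecting these two contributions against $\int_0^{\eta_0}\int_0^{\xi_*} f_2^-\,\dif\xi\,\dif\eta$ and multiplying through by $-\bar{\rho}_-\bar{q}_-$, which converts the coefficient $\frac{1}{\bar{\rho}_-\bar{q}_-}\frac{1-\bar{M}_-^2}{\bar{\rho}_-\bar{q}_-^2}$ of $\partial_\xi \delta p_-$ into the prefactor $\frac{\bar{M}_-^2-1}{\bar{\rho}_-\bar{q}_-^2}$ on the left of \eqref{pl0}, yields the claimed identity.

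I expect no deep obstacle; the difficulty is purely organizational. One must apply Fubini in the opposite order for the two terms of \eqref{u2-}, track the correct data on each of the four edges, and keep the sign and the powers of $\bar{\rho}_-$ and $\bar{q}_-$ consistent when rescaling the coefficient into the prefactor of \eqref{pl0}. Because $\Omega_{\xi_*}^-$ is a genuine rectangle with $\Gamma_1=\{\xi=0\}$, $\Gamma_2=\{\eta=0\}$, $\Gamma_4=\{\eta=\eta_0\}$ and right edge $\{\xi=\xi_*\}$, each boundary term is supplied unambiguously by either \eqref{12t} or \eqref{zzt}, so no corner or curved-boundary subtlety arises and the computation closes.
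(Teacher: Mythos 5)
Your proposal is correct and follows essentially the same route as the paper: \eqref{deltas-} and \eqref{deltaq-} are obtained by integrating \eqref{u4-} and \eqref{u3-} in $\xi$ with the entrance data \eqref{12t}, and \eqref{pl0} is obtained by integrating \eqref{u2-} over the rectangle $\Omega_{\xi_*}^-$, using the wall values $\delta\theta_-=0$ and $\delta\theta_-=\sigma$ and the entrance value $\delta p_-(0,\eta)=\sigma p_0(Y_0(\eta))$, which is exactly the paper's computation. The only quibble is the final rescaling: the integrated identity should be multiplied by $+\bar{\rho}_-\bar{q}_-$ rather than $-\bar{\rho}_-\bar{q}_-$, since the sign in the prefactor $\frac{\bar{M}_-^2-1}{\bar{\rho}_-\bar{q}_-^2}$ of \eqref{pl0} is already supplied by the minus sign in front of the coefficient of $\partial_\xi\delta p_-$ in \eqref{u2-}; this is a bookkeeping detail that does not affect the substance.
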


 \begin{proof}
\eqref{deltas-} and \eqref{deltaq-} can be obtained immediately by employing equations \eqref{u3-} and \eqref{u4-}.
Next, to show \eqref{pl0}, one observes that equation \eqref{u2-} implies that
\begin{align}
 \int_{\Omega_{\xi*}^{-}} f_2^- \dif\xi \dif \eta=& \int_{\Omega_{\xi*}^{-}} \Big(\partial_\eta \delta \theta_- - \frac{1}{\bar{\rho}_-\bar{q}_-}
  \frac{1-\bar{M}_-^2}{\bar{\rho}_-\bar{q}_-^2}\partial_\xi \delta p_-\Big) \dif\xi \dif \eta\notag\\
  =&\sigma \xi_*  + \frac{\bar{M}_-^2 -1}{\bar{\rho}_-^2 \bar{q}_-^3}\int_{0}^{\eta_0} \Big(\delta p_- (\xi_*, \eta) - \sigma p_0  \Big) \dif \eta,
\end{align}
 which is exactly \eqref{pl0}.
 \end{proof}

\subsection{Reformulation of the R-H conditions \eqref{tilgi}-\eqref{tilg4} }
Based on the Lemma \ref{elliptic} and Lemma \ref{lem1}, we will reformulate the R-H conditions \eqref{tilgi}-\eqref{tilg4} in this subsection, which will be used to provide information for proving the uniqueness of the positions of the intersection points between the shock front and the upper wall of nozzle.
First, we introduce the following transformation to fix the free boundary $\Gamma_s$ into the straight line $\xi = \xi_0$:
\begin{align*}
\mathcal{T}_{\xi_0} : \begin{cases}
 \tilde{\xi}=L + \displaystyle\frac{L - \xi_0}{L - \psi(\eta)}(\xi - L),\\
 \tilde{\eta} =\eta.
\end{cases}
\end{align*}
Then the domain $\Omega_+$ becomes the following fixed domain:
\begin{align}
{\Omega}_{{\xi}_0} \defs \{(\tilde{\xi}, \tilde{\eta})\in \mathbb{R}^2 : \xi_0 < \tilde{\xi} < L, \,0 < \tilde{\eta} < \eta_0\},
\end{align}
with the boundaries
\begin{align}
&{\Gamma}_s^0 = \{(\tilde{\xi}, \tilde{\eta})\in \mathbb{R}^2 : \tilde{\xi}=\xi_0,\ 0 < \eta < \eta_0\},\\
&{\Gamma}_2^0 = \{(\tilde{\xi}, \tilde{\eta})\in \mathbb{R}^2 :\xi_0 <\tilde{\xi}<L,\ \eta=0 \},\\
&{\Gamma}_3^0 = \{(\tilde{\xi}, \tilde{\eta})\in \mathbb{R}^2 : \tilde{\xi}= L,\ 0 < \eta < \eta_0\},\\
&{\Gamma}_4^0 = \{(\tilde{\xi}, \tilde{\eta})\in \mathbb{R}^2 :\xi_0<\tilde{\xi}< L,\ \eta =\eta_0 \}.
\end{align}
Moreover,
the problem {\bf{$\llbracket\textit{TSPL}\rrbracket$}} can be rewritten as the following problem in the fixed domain ${\Omega}_{\xi_0}$.
 \begin{enumerate}
 \item $\widetilde{\delta {U}}$ satisfies the following equations in ${\Omega}_{\xi_0}$:
 \begin{align}
  &-\frac{1}{\bar{\rho}_+\bar{q}_+}
  \frac{1}{\bar{\rho}_+\bar{q}_+^2}\partial_{\tilde{\xi}} (\bar{M}_+^2\widetilde{\delta p} + {\bar{\rho}_+\bar{q}_+}\widetilde{\delta{q}} )-\partial_{\tilde{\eta}} (\widetilde{\delta\theta})=f_1,\label{f1}\\
  & \partial_{\tilde{\xi}} (\bar{q}_+ \widetilde{\delta\theta})+ \partial_{\tilde{\eta}} \widetilde{\delta p} = f_2,\\
  & \partial_{\tilde{\xi}} \Big( \frac{1}{\bar{\rho}_+}\widetilde{\delta p} +  \bar{q}_+\cdot\widetilde{\delta{q}}\Big)= f_3,\label{f3}\\
  &\partial_{\tilde{\xi}} \widetilde{\delta s} =0,\label{f4}
\end{align}
where
\begin{align}
 & f_1(\widetilde{\delta {U}},\widetilde{\psi'}, \delta \xi)\notag\\
  \defs & \partial_{\tilde{\xi}} \left( -\frac{1}{\bar{\rho}_+\bar{q}_+}
  \frac{1}{\bar{\rho}_+\bar{q}_+^2} \Big(\bar{M}_+^2\widetilde{\delta p} + {\bar{\rho}_+\bar{q}_+}\widetilde{\delta{q}}\Big)-
  \Big(\displaystyle\frac{1}{\tilde{\rho}\tilde{ q}\cos\tilde{\theta}}- \frac{1}{\bar{\rho}_+\bar{q}_+}\Big)  \right)\notag\\
  & -\partial_{\tilde{\eta}} \Big(\widetilde{\delta \theta}-\tan\tilde{\theta} \Big)\notag\\
  &+ \displaystyle\frac{-\delta \xi + \int_{\eta}^{\eta_0}\widetilde{\psi'}(\tau)\dif \tau}{L -\widetilde{\psi}(\eta)}\partial_{\tilde{\xi}} \Big( \frac{1}{\tilde{\rho}\tilde{q} \cos\tilde{\theta}} -\frac{1}{\bar{\rho}_+\bar{q}_+} \Big)+ \frac{(\xi -L)\widetilde{\psi'}}{L-\psi}\partial_{\tilde{\xi} } (\tan\tilde{\theta}),\\
  &f_2(\widetilde{\delta {U}},\widetilde{\psi'}, \delta \xi)\notag\\
  \defs & \partial_{\tilde{\xi}} \Big(\bar{q}_+\widetilde{\delta\theta} - \tilde{q}\sin\tilde{\theta } \Big) + \displaystyle\frac{-\delta \xi + \int_{\eta}^{\eta_0} \widetilde{\psi'}(\tau)\dif \tau}{L -\widetilde{\psi}(\eta)}\partial_{\tilde{\xi}} ( \tilde{q} \sin\tilde{\theta})+ \frac{(\xi -L)\tilde{\psi'}}{L-\widetilde{\psi}}\partial_{\tilde{\xi}}\tilde{ p},\\
  &f_3(\widetilde{\delta {U}})\defs  \partial_{\tilde{\xi}} \left(\frac{1}{\bar{\rho}_+} \widetilde{\delta p} +  \bar{q}_+\cdot \widetilde{\delta{q}}- \Big(\frac12 \tilde{q}^2 + i - (\frac12 \bar{q}_+^2 + \bar{i}_+) \Big) \right).
\end{align}
\item On the nozzle walls,
\begin{align}\label{eq198}
 \widetilde{\delta \theta} =& 0,\quad \text{on}\quad \Gamma_{2}^0 \cap\overline{\Omega_{\xi_0}},\\
 \widetilde{\delta \theta} =& \sigma,\quad \text{on}\quad \Gamma_{4}^0\cap\overline{\Omega_{\xi_0}}.
\end{align}
\item On the exit,
\begin{align}\label{eq199}
  &\widetilde{\delta p} =\sigma P_e,&\quad &\text{on}\quad\Gamma_3^0.
\end{align}

\item On the free boundary ${\Gamma}_s^0$, the R-H conditions \eqref{g1}-\eqref{g4} become
    \begin{align}
  &G_i(\tilde{U}, U_-(\psi(\tilde{\eta}),\tilde{\eta}))=0, \quad i=1,2,3\quad &\text{on}\quad {\Gamma}_s^0 \label{tilgi}\\
  &G_4(\tilde{U}, U_-(\psi(\tilde{\eta}),\tilde{\eta});\psi'(\tilde{\eta}))=0. \quad &\text{on}\quad {\Gamma}_s^0 \label{tilg4}
\end{align}

\end{enumerate}
To simplify the notations, we drop `` $ \tilde{} $ '' in the arguments below.


Let ${U}= \delta {U} + \overline{U}_+$ in $\Omega_{\xi_0}$ with
\begin{align*}
  \delta {U}:=(\delta p,\delta \theta,  \delta q, \delta s).
\end{align*}
Then one has the following lemma.
\begin{lem}\label{3.1s}
  On the shock front $\Gamma_s^0$, one has
  \begin{align}\label{gammasp}
   \displaystyle\frac{\bar{M}_+^2 -1}{\bar{\rho}_+ \bar{q}_+^2} \delta {p}(\xi_0, \eta)  =& \delta f_s\\
\defs &\displaystyle\frac{\bar{M}_-^2 - 1}{ \bar{\rho}_-\bar{q}_-^2} (1 - \kappa)\delta{p}_-(\xi_0 + \delta \xi, \eta) + \kappa_1 \sigma\big(p_0 +\bar{\rho}_- \bar{q}_-  q_0\big) + \kappa_2\sigma  s_0 \notag\\
   & + \kappa_1 \int_0^{\xi_0 + \delta \xi} f_3^- (\tau,\eta)\dif \tau + O(1) \sigma^2\notag,
  \end{align}
where the bounded function $O(1)$ depends on $C_{\pm}, \overline{U}_\pm, P_e$ and $L$.
In addition, see \eqref{kappaeq1}-\eqref{kappa2eq1} for the definitions of the constants $\kappa$, $\kappa_1$ and $\kappa_2$.

\end{lem}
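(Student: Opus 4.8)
The plan is to obtain \eqref{gammasp} by linearizing the Rankine--Hugoniot conditions about the background normal shock $(\overline{U}_-,\overline{U}_+)$ and then eliminating the upstream quantities $\delta q_-$ and $\delta s_-$ through Lemma \ref{lem1}. The pivotal structural fact is that three of the four R-H conditions, namely $G_1,G_2,G_3$ in \eqref{g0}--\eqref{g3}, carry no $\psi'$; only $G_4$ in \eqref{g4} does. Consequently these three relations couple the downstream jump $\delta U=(\delta p,\delta\theta,\delta q,\delta s)$ on $\Gamma_s^0$ (where $\xi=\xi_0$) to the upstream state $\delta U_-$ at $(\psi(\eta),\eta)$ without any reference to the shock slope, which is exactly why \eqref{gammasp} contains no $\psi'$.

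First I would expand $G_1,G_2,G_3$ to first order. Writing $u=q\cos\theta$, $v=q\sin\theta$, $\rho=(p/A(s))^{1/\gamma}$, and using at the background (where $\bar\theta_\pm=0$, $\bar v_\pm=0$) the relations $\delta u=\delta q$, $\delta v=\bar q\,\delta\theta$ and $\delta\rho=\delta p/\bar c^2-\bar\rho\,\delta s/(\gamma c_v)$, the decisive observation is that in $G_1$ and $G_2$ every flow-angle contribution enters multiplied either by $[\bar v]=0$ or by a coefficient such as $[\bar v/\bar u]$ that vanishes on the background, while in $G_3$ the term $q^2=u^2+v^2$ produces no $\delta\theta$ at linear order. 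Together with the background identities \eqref{bg1}--\eqref{bg3}, which force $[1/(\bar\rho\bar q)]=0$ and $[\bar u+\bar p/(\bar\rho\bar u)]=0$, this makes all $\delta\theta_\pm$ terms (and $\psi'$) drop out, leaving three linear equations in the jumps of $p,q,s$ alone with remainder $O(\sigma^2)$. Solving this $3\times3$ system for $\delta p=\delta p_+$ gives
\[
\delta p(\xi_0,\eta)=c_p\,\delta p_-+c_q\,\delta q_-+c_s\,\delta s_-+O(1)\sigma^2
\]
with explicit background coefficients; multiplying by $(\bar M_+^2-1)/(\bar\rho_+\bar q_+^2)$ and simplifying via $\bar c_\pm^2=\gamma\bar p_\pm/\bar\rho_\pm$ and the continuity of the mass flux $\bar\rho_+\bar q_+=\bar\rho_-\bar q_-$ should reproduce the constants $\kappa,\kappa_1,\kappa_2$ of \eqref{kappaeq1}--\eqref{kappa2eq1}.

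Next I would invoke Lemma \ref{lem1}: by \eqref{deltas-} one replaces $\delta s_-$ with $\sigma s_0(Y_0(\eta))$, and by \eqref{deltaq-} one replaces the combination governing $\delta q_-$ with $\sigma\bar q_- q_0+\tfrac{\sigma}{\bar\rho_-}p_0+\int_0^{\xi}f_3^-$. Substituting these into the expression above folds the $\delta q_-$ and $\delta s_-$ contributions into the data terms $\kappa_1\sigma(p_0+\bar\rho_-\bar q_- q_0)$, $\kappa_2\sigma s_0$ and $\kappa_1\int_0^{\xi_0+\delta\xi}f_3^-$, while the residual coefficient of $\delta p_-$ becomes $\tfrac{\bar M_-^2-1}{\bar\rho_-\bar q_-^2}(1-\kappa)$. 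Since the upstream state is read off at $(\psi(\eta),\eta)$ whereas \eqref{gammasp} records it at $(\xi_0+\delta\xi,\eta)$, I would absorb the base-point shift through $\psi(\eta)-(\xi_0+\delta\xi)=-\int_\eta^{\eta_0}\psi'=O(\sigma)$ combined with the $\mcc^{1}$-bound $\|\delta U_-\|_{\mcc^{2,\alpha}(\overline{\Omega_-})}\le C_-\sigma$ from \eqref{zz0}, at a cost of only $O(\sigma^2)$.

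The hard part is the second step: the algebra of the linearization and the verification that the normalized coefficients collapse \emph{exactly} onto $\kappa,\kappa_1,\kappa_2$. Several of the needed cancellations hinge delicately on the background R-H identities \eqref{bg1}--\eqref{bg3}, and one must also confirm that every discarded quantity—the quadratic Taylor remainders, the products $\psi'[\delta p]$ and $\delta\theta\,\delta(\cdot)$, and the base-point shift—is genuinely $O(1)\sigma^2$ with the asserted dependence on $C_\pm$, $\overline{U}_\pm$, $P_e$ and $L$. For this I would rely on the a priori bounds $\|\delta U\|_{(\Omega_+;P_i)}+\|\psi'\|_{\mcc^{0,\alpha}(\Gamma_s)}\le C_+\sigma$ from Lemma \ref{elliptic} and $\|\delta U_-\|_{\mcc^{2,\alpha}(\overline{\Omega_-})}\le C_-\sigma$ from \eqref{zz0}, so that each neglected product of two small factors is uniformly controlled.
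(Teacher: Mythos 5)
Your proposal is correct and follows essentially the same route as the paper: linearize the three $\psi'$-free Rankine--Hugoniot conditions $G_1,G_2,G_3$ about the background normal shock (the quadratic terms such as $[\tfrac{v}{u}][v]$ and all Taylor remainders being $O(\sigma^2)$ by Lemma \ref{elliptic} and \eqref{zz0}), absorb the base-point shift $U_-(\psi(\eta),\eta)=U_-(\xi_0+\delta\xi,\eta)+O(1)\sigma^2$, and then eliminate $\delta q_-$ and $\delta s_-$ via \eqref{deltas-}--\eqref{deltaq-} to land on the coefficients $\kappa,\kappa_1,\kappa_2$. This is precisely the computation carried out in the paper's proof via the remainders $g_1,g_2,g_3$ and the identities \eqref{RHg1}--\eqref{RHg3}.
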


\begin{proof}
Applying \eqref{tilgi}, direct calculations yield that
\begin{align*}
  &\frac{1}{\bar{\rho}_+^2\bar{q}_+ \bar{c}_+^2} \delta p + \frac{1}{\bar{\rho}_+ \bar{q}_+^2}\delta q - \frac{1}{\gamma c_v}\frac{1}{\bar{\rho}_+\bar{q}_+}\delta s\\
  & - \Big( \frac{1}{\bar{\rho}_-^2\bar{q}_- \bar{c}_-^2} \delta p_- + \frac{1}{\bar{\rho}_- \bar{q}_-^2}\delta q_- - \frac{1}{\gamma c_v}\frac{1}{\bar{\rho}_-\bar{q}_-}\delta s_- \Big)=g_1,\\
  &\frac{1}{\bar{\rho}_+ \bar{q}_+}\delta p + \delta q - \bar{p}_+ \Big( \frac{1}{\bar{\rho}_+^2\bar{q}_+ \bar{c}_+^2} \delta p + \frac{1}{\bar{\rho}_+ \bar{q}_+^2}\delta q - \frac{1}{\gamma c_v}\frac{1}{\bar{\rho}_+\bar{q}_+}\delta s\Big)\\
  & - \Big( \frac{1}{\bar{\rho}_- \bar{q}_-}\delta p_- + \delta q_-   - \bar{p}_- \Big( \frac{1}{\bar{\rho}_-^2\bar{q}_- \bar{c}_-^2} \delta p_- + \frac{1}{\bar{\rho}_- \bar{q}_-^2}\delta q_- - \frac{1}{\gamma c_v}\frac{1}{\bar{\rho}_- \bar{q}_-}\delta s_-\Big)\Big)=g_2,\\
  & \frac{1}{\bar{\rho}_+} \delta p + \bar{q}_+ \delta q + \frac{1}{(\gamma-1)c_v}\frac{\bar{p}_+}{\bar{\rho}_+}\delta s - \Big(  \frac{1}{\bar{\rho}_-} \delta p_- + \bar{q}_- \delta q_- + \frac{1}{(\gamma-1)c_v}\frac{\bar{p}_-}{\bar{\rho}_-}\delta s_- \Big)=g_3,
\end{align*}
where
\begin{align}
  g_1=& \frac{1}{[p]}[\frac{v}{u}][v] + \Big( \frac{1}{\rho q \cos\theta} - \frac{1}{\bar{\rho}_+ \bar{q}_+} + \frac{1}{\bar{\rho}_+^2\bar{q}_+ \bar{c}_+^2} \delta p + \frac{1}{\bar{\rho}_+ \bar{q}_+^2}\delta q - \frac{1}{\gamma c_v}\frac{1}{\bar{\rho}_+\bar{q}_+}\delta s  \Big)\notag\\
  &- \Big( \frac{1}{\rho_- q_- \cos\theta_-} - \frac{1}{\bar{\rho}_- \bar{q}_-} + \frac{1}{\bar{\rho}_-^2\bar{q}_- \bar{c}_-^2} \delta p_- + \frac{1}{\bar{\rho}_- \bar{q}_-^2}\delta q_- - \frac{1}{\gamma c_v}\frac{1}{\bar{\rho}_-\bar{q}_-}\delta s_-  \Big),\label{rhg1}\\
  g_2=& -\frac{1}{[p]}[\frac{pv}{u}][v] -\Big(q\cos\theta + \frac{p}{\rho q \cos\theta} - \bar{q}_+ -\frac{\bar{p}_+}{\bar{\rho}_+ \bar{q}_+} - \delta q - \frac{1}{\bar{\rho}_+ \bar{q}_+}\delta p  \Big)\notag\\
   &- \bar{p}_+ \Big( \frac{1}{\bar{\rho}_+^2\bar{q}_+ \bar{c}_+^2} \delta p + \frac{1}{\bar{\rho}_+ \bar{q}_+^2}\delta q - \frac{1}{\gamma c_v}\frac{1}{\bar{\rho}_+\bar{q}_+}\delta s  \Big)\notag\\
   &+ \Big(q_-\cos\theta_- + \frac{p_-}{\rho_- q_- \cos\theta_-} - \bar{q}_- -\frac{\bar{p}_-}{\bar{\rho}_- \bar{q}_-} - \delta q_- - \frac{1}{\bar{\rho}_- \bar{q}_-}\delta p_-  \Big)\notag\\
   &+\bar{p}_- \Big( \frac{1}{\bar{\rho}_-^2\bar{q}_- \bar{c}_-^2} \delta p_- + \frac{1}{\bar{\rho}_- \bar{q}_-^2}\delta q_- - \frac{1}{\gamma c_v}\frac{1}{\bar{\rho}_- \bar{q}_-}\delta s_-  \Big),\\
   g_3=& - \Big(\frac12 q^2 + i - \frac12 \bar{q}_+^2 - \bar{i}_+ - \frac{1}{\bar{\rho}_+}\delta p - \bar{q}_+ \delta q - \frac{1}{(\gamma-1)c_v}\frac{\bar{p}_+}{\bar{\rho}_+}\delta s      \Big)\notag\\
   & + \Big(\frac12 q_-^2 + i_- - \frac12 \bar{q}_-^2 - \bar{i}_- - \frac{1}{\bar{\rho}_-}\delta p_- - \bar{q}_- \delta q_- - \frac{1}{(\gamma-1)c_v}\frac{\bar{p}_-}{\bar{\rho}_-}\delta s_-     \Big).\label{rhg3}
\end{align}
Applying Lemma \ref{elliptic} and \eqref{zz0}, it is easy to see that
\begin{align}
  g_i = O(1) \sigma^2,
\end{align}
where the bounded function $O(1)$ depends on $C_{\pm}, \overline{U}_\pm, P_e$ and $L$.
In addition, notice that
\begin{align}
  U_- (\psi(\eta), \eta) =& U_- (\xi_0 + \delta\xi - \int_{\eta}^{\eta_0} \psi'(\tau)\dif \tau)\notag\\
  =&  U_- (\xi_0 + \delta\xi)+ O(1) \sigma^2,
\end{align}
where $O(1)$ depends on $C_{\pm}, \overline{U}_\pm, P_e$ and $L$.
Employing \eqref{deltas-}-\eqref{deltaq-}, direct calculations yield that
\begin{align}
   &\frac{1}{\bar{\rho}_+ \bar{c}_+^2} \delta p + \frac{1}{\bar{q}_+}\delta q - \frac{1}{\gamma c_v}\delta s\notag\\
  =& \bar{\rho}_+ \bar{q}_+ g_1  + \frac{1}{\bar{\rho}_-\bar{q}_-^2}(\bar{M}_-^2 -1)\delta p_- \notag\\
  & + \frac{1}{\bar{\rho}_-\bar{q}_-^2}(\sigma p_0 + \bar{\rho}_- \bar{q}_- \sigma q_0 + \int_{0}^{\xi_0 + \delta \xi} f_3^- (\tau,\eta)\dif \tau) -\frac{1}{\gamma c_v }\sigma s_0,\label{RHg1}\\
   &\delta p + \bar{\rho}_+ \bar{q}_+\delta q \notag\\
   =& \bar{\rho}_+ \bar{q}_+ g_2 + \bar{\rho}_+ \bar{q}_+ \bar{p}_+ g_1 + \frac{[\bar{p}]}{\bar{\rho}_-\bar{q}_-^2} (\bar{M}_-^2-1)\delta p_- -\frac{[\bar{p}]}{\gamma c_v} \sigma s_0\notag\\
   & - \Big( 1+ \frac{[\bar{p}]}{\bar{\rho}_-\bar{q}_-^2}  \Big) \Big( \sigma p_0 + \bar{\rho}_- \bar{q}_- \sigma q_0 +  \int_{0}^{\xi_0 + \delta\xi} f_3^- (\tau,\eta)\dif \tau \Big),\label{RHg2}\\
    &\delta p + \bar{\rho}_+ \bar{q}_+ \delta q + \frac{1}{(\gamma-1)c_v}\bar{p}_+ \delta s\notag\\
    =& \bar{\rho}_+ g_3 + \frac{\bar{\rho}_+}{\bar{\rho_-}}\Big(   \sigma p_0 + \bar{\rho}_- \bar{q}_- \sigma q_0 +  \int_{0}^{\xi_0 + \delta\xi} f_3^- (\tau,\eta)\dif \tau  \Big).\label{RHg3}
\end{align}
Further calculations yield that \eqref{gammasp}.

\end{proof}

\subsection{Uniqueness of $\psi(\eta_0)$}
With the help of the Lemma \ref{elliptic}, Lemma \ref{lem1} and Lemma \ref{3.1s}, we will prove the uniqueness of the position $\psi(\eta_0)$ of the intersection point between the shock front and the upper wall of the nozzle. First, by applying \eqref{f3}, one has
\begin{align}\label{subst}
  \partial_{\xi}(\bar{\rho}_+\bar{q}_+\delta{q}) = -\partial_{\xi}\delta p +\bar{\rho}_+ f_3.
\end{align}
Substituting \eqref{subst} into \eqref{f1}, it follows that 
\begin{align}\label{recombi}
\partial_{\xi}\Big(\frac{1}{\bar{\rho}_+\bar{q}_+}\frac{1-\bar{M}_+^2} {\bar{\rho}_+\bar{q}_+^2}\delta p \Big)-\partial_\eta (\delta \theta)= f_1 + \frac{1}{\bar{\rho}_+\bar{q}_+^3} f_3.
\end{align}
Applying \eqref{recombi} and \eqref{eq198}-\eqref{tilg4}, it yields that
 \begin{align}\label{solva0}
\mathcal{F}(\delta\xi)
\defs&\frac{1- \bar{M}_+^2}{\bar{\rho}_+^2 \bar{q}_+^3} \int_{0}^{\eta_0} \Big(\delta p(\xi_0, \eta) -  \sigma P_e\Big) \dif \eta + \sigma\cdot (L-\xi_0)\notag\\
  & + \int_{0}^{\eta_0}\int_{\xi_0}^{L} \Big(f_1 + \frac{1}{\bar{\rho}_+\bar{q}_+^3} f_3  \Big)\dif \xi \dif \eta\notag\\
  =& 0.
\end{align}
Then the uniqueness of  $\psi(\eta_0)$  can be obtained by showing the following lemma.
\begin{lem}\label{3.4}
There exists a unique point $\delta \xi\in (-\xi_0, L-\xi_0)$ such that
 \begin{align}\label{uni}
 \mathcal{F}(\delta \xi) = 0.
 \end{align}

\end{lem}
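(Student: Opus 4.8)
The plan is to treat $\mathcal{F}(\delta\xi)=0$ as a scalar equation on the interval $(-\xi_0,\,L-\xi_0)$ and to prove that $\mathcal{F}$ is continuous and \emph{strictly monotone} in $\delta\xi$. Strict monotonicity immediately yields at most one root, while the existence of an interior root is guaranteed either by Theorem \ref{thm1:existence} (whose solution $\varphi_{\sharp}$ furnishes a value $\psi(\eta_0)=\xi_0+\delta\xi$ fulfilling the solvability condition) or, more self-containedly, by the intermediate value theorem once the sign of $\mathcal{F}$ at the two endpoints is pinned down by the assumption \eqref{Peinterval}. First I would make the $\delta\xi$-dependence of $\mathcal{F}$ explicit. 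In \eqref{solva0} the exit contribution $\sigma P_e$ and the geometric term $\sigma(L-\xi_0)$ are constants, and the domain $\Omega_{\xi_0}$ of the double integral is fixed; hence the only term carrying the leading dependence on $\delta\xi$ is the boundary integral $\int_0^{\eta_0}\delta p(\xi_0,\eta)\,\dif\eta$, which enters through the subsonic pressure on $\Gamma_s^0$.

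Next I would substitute Lemma \ref{3.1s}, using the identity $\frac{1-\bar{M}_+^2}{\bar{\rho}_+^2\bar{q}_+^3}\delta p(\xi_0,\eta)=-\frac{1}{\bar{\rho}_+\bar{q}_+}\delta f_s$ that follows from \eqref{gammasp}, and then invoke \eqref{pl0} of Lemma \ref{lem1} to evaluate $\frac{\bar{M}_-^2-1}{\bar{\rho}_-\bar{q}_-^2}\int_0^{\eta_0}\delta p_-(\xi_0+\delta\xi,\eta)\,\dif\eta$. The crucial observation is that \eqref{pl0} produces the genuinely linear term $-\sigma\bar{\rho}_-\bar{q}_-(\xi_0+\delta\xi)$, whose coefficient is a fixed nonzero constant of definite sign; after propagating it through \eqref{gammasp} and using the Rankine–Hugoniot identity $\bar{\rho}_-\bar{q}_-=\bar{\rho}_+\bar{q}_+$ from \eqref{bg1}, it emerges as a leading slope $\frac{\dif\mathcal{F}}{\dif(\delta\xi)}$ of order $\sigma$, with sign governed by the constant $\kappa$ of \eqref{kappaeq1}. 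All other pieces of $\delta f_s$ — the entrance terms $\kappa_1\sigma(p_0+\bar{\rho}_-\bar{q}_-q_0)$ and $\kappa_2\sigma s_0$, and the integral $\kappa_1\int_0^{\xi_0+\delta\xi}f_3^-\,\dif\tau$ — are either independent of $\delta\xi$ or, since $\partial_{\delta\xi}\!\int_0^{\xi_0+\delta\xi}f_3^-\,\dif\tau=f_3^-(\xi_0+\delta\xi,\eta)$, of order $\sigma^2$ by the definition of $f_3^-$ together with the supersonic estimate \eqref{zz0}.

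The main obstacle is to control the $\delta\xi$-dependence of the nonlinear remainder integral $\int_0^{\eta_0}\int_{\xi_0}^{L}\big(f_1+\frac{1}{\bar{\rho}_+\bar{q}_+^3}f_3\big)\,\dif\xi\,\dif\eta$ and of the $O(\sigma^2)$ error term in \eqref{gammasp}. These sources are quadratic in $\delta U$, hence $O(\sigma^2)$ in size by Lemma \ref{elliptic}, but differentiating them in $\delta\xi$ produces $\partial_{\delta\xi}\delta U$, so I must show that the subsonic solution depends on the shock-position parameter in a Lipschitz manner with constant $O(\sigma)$. This is where the a priori estimates of Lemma \ref{elliptic} are used in difference form: comparing the subsonic problems attached to two positions $\delta\xi^{(1)},\delta\xi^{(2)}$ and estimating the difference of solutions through the same elliptic and oblique-derivative machinery (maximum principle, Hopf lemma, weighted Schauder estimates) yields a bound of the type $\|\delta U^{(1)}-\delta U^{(2)}\|_{(\Omega_{\xi_0};P_i)}\leq C\sigma\,|\delta\xi^{(1)}-\delta\xi^{(2)}|$. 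Consequently the remainder's contribution to $\mathcal{F}(\delta\xi^{(1)})-\mathcal{F}(\delta\xi^{(2)})$ is $O(\sigma^2)\,|\delta\xi^{(1)}-\delta\xi^{(2)}|$, dominated by the linear term once $\sigma$ is small.

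Finally I would conclude by writing $\mathcal{F}(\delta\xi^{(1)})-\mathcal{F}(\delta\xi^{(2)})=(c\sigma+O(\sigma^2))(\delta\xi^{(1)}-\delta\xi^{(2)})$ with $c\neq 0$ a fixed constant; for $0<\sigma\leq\sigma_1$ small this is strictly signed, giving strict monotonicity and hence uniqueness of the root. For existence I would evaluate the leading expression of $\mathcal{F}$ as $\delta\xi\to-\xi_0$ (so $\psi(\eta_0)\to 0$) and as $\delta\xi\to L-\xi_0$ (so $\psi(\eta_0)\to L$); the $\delta\xi$-independent constants assemble exactly into $\mathcal{G}$, so that the hypothesis \eqref{Peinterval} on $P_e$ is precisely the statement that $\mathcal{F}$ changes sign between the two endpoints, whereupon continuity of $\mathcal{F}$ in $\delta\xi$ and the intermediate value theorem produce the desired interior root; alternatively, existence of a root is inherited directly from Theorem \ref{thm1:existence}. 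Combining existence with strict monotonicity gives the unique $\delta\xi\in(-\xi_0,L-\xi_0)$ solving \eqref{uni}.
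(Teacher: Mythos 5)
Your proposal is correct and follows essentially the same route as the paper: substitute Lemma \ref{3.1s} and \eqref{pl0} into \eqref{solva0} to expose the linear term $(1-\kappa)(\xi_0+\delta\xi)$, obtain strict monotonicity from $\kappa>0$ with slope $-\kappa\sigma+O(\sigma^2)$, and get existence from the endpoint signs encoded in \eqref{Peinterval}. The only notable difference is that the paper disposes of the remainder integral $\iint f_1$ by exploiting its explicit structure (the factor $\frac{\delta\xi}{L-\xi_0-\delta\xi}$ times an exact $\xi$-derivative integrates to boundary terms that recombine with the main term), whereas you propose a Lipschitz-in-$\delta\xi$ difference estimate for the subsonic solution — a valid alternative that in fact makes explicit a point the paper leaves as ``easy to check'' in \eqref{monoF}.
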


\begin{proof}
Note that
\begin{align}
  f_3 = O(1) \sigma^2,
\end{align}
where the bounded function $O(1)$ depends on $C_{\pm}, \overline{U}_\pm, P_e$ and $L$.
Applying Lemma \ref{elliptic}, direct calculations yield that
\begin{align}\label{f1=}
  f_1 = - \frac{\delta \xi}{L-\xi_0 -\delta \xi}\partial_\xi
   \Big(\frac{1}{\rho q \cos\theta} - \frac{1}{\bar{\rho}_+ \bar{q}_+}   \Big) + O(1)\sigma^2,
\end{align}
where the bounded function $O(1)$ depends on $C_{\pm}, \overline{U}_\pm, P_e$ and $L$. Thus, \eqref{f1=} implies that
\begin{align}\label{ref1}
   &\int_{0}^{\eta_0}\int_{\xi_0}^{L} f_1\dif \xi \dif \eta\notag\\
   =& - \frac{\delta \xi}{L-\xi_0 -\delta \xi}\int_{0}^{\eta_0}\int_{\xi_0}^{L}\partial_\xi
   \Big(\frac{1}{\rho q \cos\theta} - \frac{1}{\bar{\rho}_+ \bar{q}_+}   \Big)\dif \xi \dif \eta + O(1) \sigma^2 \notag\\
   =& \frac{\delta \xi}{L-\xi_0 -\delta \xi}\int_{0}^{\eta_0}\int_{\xi_0}^{L}\partial_\xi
   \Big( \frac{1}{\bar{\rho}_+^2 \bar{q}_+ \bar{c}_+^2}\delta p + \frac{1}{\bar{\rho}_+ \bar{q}_+^2}\delta q  \Big)\dif \xi \dif \eta + O(1) \sigma^2\cdot \delta \xi  + O(1) \sigma^2\notag\\
   =&\frac{\delta \xi}{L-\xi_0 -\delta \xi}\int_{0}^{\eta_0}\frac{1}{\bar{\rho}_+^2 \bar{q}_+ \bar{c}_+^2}\Big(\delta p(L,\eta) - \delta p(\xi_0,\eta)\Big)\dif \eta \notag\\
   &+\frac{\delta \xi}{L-\xi_0 -\delta \xi}\int_{0}^{\eta_0}  \frac{1}{\bar{\rho}_+ \bar{q}_+^2}\Big(\delta q(L,\eta) - \delta q(\xi_0,\eta)\Big)\dif \eta\notag\\
    &+ O(1) \sigma^2\cdot \delta \xi+ O(1) \sigma^2.
\end{align}
From the equation \eqref{f3}, one can deduce that
\begin{align}\label{f3=eq}
&\frac{1}{\bar{\rho}_+ \bar{q}_+^2}\Big(\delta q(L,\eta) - \delta q(\xi_0,\eta)\Big) \notag\\
=& \frac{1}{\bar{\rho}_+^2 \bar{q}_+^3}\Big(\delta p(\xi_0,\eta) - \delta p(L,\eta)\Big) + \frac{1}{\bar{\rho}_+ \bar{q}_+^3}\int_{\xi_0}^{L} f_3 \dif\xi \dif\eta.
\end{align}
Substituting \eqref{f3=eq} into \eqref{ref1} and applying Lemma \ref{elliptic}, one obtains
\begin{align}
  &\int_{0}^{\eta_0}\int_{\xi_0}^{L} f_1\dif \xi \dif \eta\notag\\
  =&\frac{\delta \xi}{L-\xi_0 -\delta \xi}\int_{0}^{\eta_0}\frac{\bar{M}_+^2 -1}{\bar{\rho}_+^2 \bar{q}_+^3} \Big(\delta p(L,\eta) - \delta p(\xi_0,\eta)\Big)\dif \eta \notag\\
  &+ O(1) \sigma^2\cdot \delta \xi+ O(1) \sigma^2.
\end{align}
Therefore, \eqref{solva0} yields that
\begin{align}\label{mathF}
  \mathcal{F}(\delta \xi) = &\Big(1+\frac{\delta \xi}{L-\xi_0 -\delta \xi}\Big)\frac{1-\bar{M}_+^2}{\bar{\rho}_+^2 \bar{q}_+^3}\int_{0}^{\eta_0} \Big( \delta p(\xi_0,\eta) - \sigma P_e \Big)\dif \eta \notag\\
  & + (L-\xi_0)\sigma +  O(1) \sigma^2\cdot \delta \xi+ O(1) \sigma^2.
\end{align}
By applying \eqref{gammasp} and \eqref{pl0}, one has
\begin{align}\label{substi}
  &\frac{1-\bar{M}_+^2}{\bar{\rho}_+ \bar{q}_+^2}\int_{0}^{\eta_0} \delta p(\xi_0,\eta)\dif \eta \notag\\
  =& \frac{1-\bar{M}_-^2}{\bar{\rho}_- \bar{q}_-^2}(1-k)\int_{0}^{\eta_0}\delta p_-(\xi_0+\delta \xi,\eta)\dif \eta - \sigma \int_{0}^{\eta_0} (\kappa_1p_0 + \kappa_1 \bar{\rho}_-\bar{q}_- q_0 + \kappa_2 s_0) \dif \eta \notag\\
  & - \kappa_1 \int_0^{\eta_0} \int_0^{\xi_0 +\delta \xi} f_3^-\dif \xi \dif \eta\notag\\
  =& (1-k) \Big(\sigma\bar{\rho}_- \bar{q}_-(\xi_0 + \delta \xi)
     -\sigma \frac{\bar{M}_-^2-1}{\bar{\rho}_- \bar{ q}_-^2}\int_{0}^{\eta_0} p_0\dif \eta - \bar{\rho}_-\bar{q}_- \int_0^{\eta_0}\int_0^{\xi_0 +\delta \xi} f_2^- \dif \xi \dif \eta \Big)\notag\\
     &- \sigma \int_{0}^{\eta_0} (\kappa_1p_0 + \kappa_1 \bar{\rho}_-\bar{q}_- q_0 + \kappa_2 s_0) \dif \eta - \kappa_1 \int_0^{\eta_0} \int_0^{\xi_0 +\delta \xi} f_3^-\dif \xi \dif \eta.
\end{align}
Substituting \eqref{substi} into \eqref{mathF}, one has
\begin{align}\label{monooF}
 & \mathcal{F}(\delta \xi) \notag\\
 =& \sigma\frac{L-\xi_0}{L-\xi_0-\delta\xi}(1-k)(\xi_0 +\delta \xi) +  (L-\xi_0)\sigma\notag\\
  &-\sigma \frac{L-\xi_0}{L-\xi_0-\delta\xi} \frac{1}{\bar{\rho}_+ \bar{q}_+}\int_{0}^{\eta_0}\Big( \big( \kappa_1+ (1-k)\frac{\bar{M}_-^2 -1}{\bar{\rho}_- \bar{q}_-^2}\big)p_0 + \kappa_1\bar{\rho}_-\bar{q}_-q_0 + \kappa_2 s_0\Big)\dif \eta\notag\\
  & - \sigma\frac{L-\xi_0}{L-\xi_0-\delta\xi}\frac{1-\bar{M}_+^2}{\bar{\rho}_+^2 \bar{q}_+^3}\int_{0}^{\eta_0}  P_e \dif \eta +  O(1) \sigma^2\cdot \delta \xi+ O(1) \sigma^2\notag\\
  =& \sigma\frac{L-\xi_0}{L-\xi_0-\delta\xi}\widetilde{\mathcal{F}}(\delta \xi),
\end{align}
where
\begin{align}
  \widetilde{\mathcal{F}}(\delta \xi)\defs& (1-k)(\xi_0 +\delta \xi) + L-\xi_0-\delta \xi  \notag\\
  &- \frac{1}{\bar{\rho}_+ \bar{q}_+}\int_{0}^{\eta_0}\Big( \big( \kappa_1+ (1-k)\frac{\bar{M}_-^2 -1}{\bar{\rho}_- \bar{q}_-^2}\big)p_0 + \kappa_1\bar{\rho}_-\bar{q}_-q_0 + \kappa_2 s_0\Big)\dif \eta\notag\\
  & - \frac{1-\bar{M}_+^2}{\bar{\rho}_+^2 \bar{q}_+^3}\int_{0}^{\eta_0}  P_e \dif \eta +  O(1) \sigma\cdot (1+\delta \xi)\frac{L-\xi_0-\delta\xi}{L-\xi_0}.
\end{align}
It is easy to check that
\begin{align}\label{monoF}
   \widetilde{\mathcal{F}}'(\delta \xi) = -k + O(1)\sigma < 0,
\end{align}
for the sufficiently small constant $\sigma > 0 $, which yields that $ \widetilde{\mathcal{F}}(\delta \xi)$ is a strictly decreasing function. Therefore, there exists a unique $\delta \xi$ such that $\widetilde{\mathcal{F}}(\delta \xi)=0$. Furthermore, there exists a unique $\delta \xi\in (-\xi_0, L-\xi_0)$ such that ${\mathcal{F}}(\delta \xi)=0$.

\end{proof}

\subsection{Uniqueness of the transonic shock solutions}
Lemma \ref{3.4} yields that the uniqueness of the shock position $\xi_*$. Therefore, one can fix the shock front $ \{\xi =\psi(\eta)\}$ into $\{\xi = \xi_*\}$. Let
\begin{align*}
\mathcal{T}_{\xi_*} : \begin{cases}
 \tilde{\xi}= L + \displaystyle\frac{L - \xi_*}{L - \psi(\eta)}(\xi - L),\\
 \tilde{\eta} =\eta.
\end{cases}
\end{align*}
Then the domain $\Omega_+$ becomes
\begin{align}
{\Omega}_{\xi_*} \defs \{(\tilde{\xi}, \tilde{\eta})\in \mathbb{R}^2 : \xi_* < \tilde{\xi} < L, \,0 < \tilde{\eta} < \eta_0\},
\end{align}
with the boundaries
\begin{align}
&{\Gamma}_s^* = \{(\tilde{\xi}, \tilde{\eta})\in \mathbb{R}^2 : \tilde{\xi}=\xi_*,\ 0 < \tilde{\eta} < \eta_0\},\\
&{\Gamma}_2^* = \{(\tilde{\xi}, \tilde{\eta})\in \mathbb{R}^2 :\xi_* <\tilde{\xi}<L,\ \tilde{\eta}=0 \},\\
&{\Gamma}_3^* = \{(\tilde{\xi}, \tilde{\eta})\in \mathbb{R}^2 : \tilde{\xi}= L,\ 0 < \tilde{\eta} < \eta_0\},\\
&{\Gamma}_4^* = \{(\tilde{\xi}, \tilde{\eta})\in \mathbb{R}^2 :\xi_* <\tilde{\xi}< L,\ \tilde{\eta} =\eta_0 \}.
\end{align}
In addition, the intersection points $P_i$ (see Figure \ref{fig:3L}) become $\widetilde{P}_1 = (\xi_*,0)$ and $\widetilde{P}_j = P_j$ $(j=2,3,4)$.
Suppose that there exist two pair of solutions $U_1\defs(p_1,\theta_1,q_1,s_1 )^\top$ and $U_2\defs (p_2, \theta_2, q_2, s_2)^\top$ to the problem {\bf {$\llbracket\textit{TSPL}\rrbracket$}} with the same boundary conditions, and the corresponding shock fronts are $\psi_1(\eta)$ and $\psi_2(\eta)$ respectively, where
 \begin{align}\label{plm}
  \psi_1(\eta) = \xi_* - \int_{\eta}^{\eta_0} \psi_1'(\tau)\dif \tau,\quad
  \psi_2(\eta)=\xi_* - \int_{\eta}^{\eta_0} \psi_2'(\tau)\dif \tau.
\end{align}
 Let
 \begin{align*}
 (P, \Theta, Q,S) =(p_2 - p_1, \theta_2 - \theta_1, q_2 - q_1, s_2-s_1).
 \end{align*}
Applying \eqref{Conf1}-\eqref{Conf4} and dropping `` $ \tilde{} $ '', direct calculations yield that
  \begin{align}
 &\partial_{\xi}\Big(\frac{1}{\bar{\rho}_+\bar{q}_+}\frac{1-\bar{M}_+^2} {\bar{\rho}_+\bar{q}_+^2} P \Big)-\partial_\eta \Theta= F_1 + \frac{1}{\bar{\rho}_+\bar{q}_+^3} F_3,\label{F1}\\
  & \partial_{{\xi}} (\bar{q}_+\Theta)+ \partial_{{\eta}} P = F_2,\label{F2}\\
  & \partial_{{\xi}} \Big( \frac{1}{\bar{\rho}_+} P+  \bar{q}_+ Q\Big)= F_3,\label{F3}\\
  &\partial_{{\xi}} S=0,\label{F4}
\end{align}
where
\begin{align}
   F_1\defs & \partial_{{\xi}} \left( -\frac{1}{\bar{\rho}_+\bar{q}_+}
  \frac{1}{\bar{\rho}_+\bar{q}_+^2} \Big(\bar{M}_+^2 p_2 + {\bar{\rho}_+\bar{q}_+} q_2\Big)-
  \Big(\displaystyle\frac{1}{{\rho_2}{q}_2\cos{\theta}_2}- \frac{1}{\bar{\rho}_+\bar{q}_+}\Big)  \right)\notag\\
   & -\partial_{{\xi}} \left( -\frac{1}{\bar{\rho}_+\bar{q}_+}
  \frac{1}{\bar{\rho}_+\bar{q}_+^2} \Big(\bar{M}_+^2 p_2 + {\bar{\rho}_+\bar{q}_+} q_1\Big)-
  \Big(\displaystyle\frac{1}{{\rho_1}{q}_1\cos{\theta}_1}- \frac{1}{\bar{\rho}_+\bar{q}_+}\Big)  \right)\notag\\
  & -\partial_{{\eta}} (\theta_2 -\tan \theta_2)+\partial_{{\eta}} (\theta_1 -\tan \theta_1)\notag\\
  &+ \displaystyle\frac{\int_{\eta}^{\eta_0}{\psi_2'}(\tau)\dif \tau}{L -{\psi_2}(\eta)}\partial_{{\xi}} \Big( \frac{1}{{\rho_2}q_2 \cos{\theta}_2} -\frac{1}{\bar{\rho}_+\bar{q}_+} \Big)+ \frac{(\xi -L){\psi_2'}}{L-\psi}\partial_{{\xi} } (\tan{\theta}_2)\notag\\
  &- \displaystyle\frac{\int_{\eta}^{\eta_0}{\psi_1'}(\tau)\dif \tau}{L -{\psi_1}(\eta)}\partial_{{\xi}} \Big( \frac{1}{{\rho_1}q_1 \cos{\theta}_1} -\frac{1}{\bar{\rho}_+\bar{q}_+} \Big)- \frac{(\xi -L){\psi_1'}}{L-\psi}\partial_{{\xi} } (\tan{\theta}_1),\\
  F_2\defs & \partial_{{\xi}}(\bar{q}_+ \theta_2
   -{q}_2\sin {\theta}_2 ) + \displaystyle\frac{\int_{\eta}^{\eta_0} {\psi_2'}(\tau)\dif \tau}{L -{\psi}_2(\eta)}\partial_{{\xi}} ( {q}_2 \sin {\theta}_2)+ \frac{(\xi -L){\psi_2'}}{L-{\psi_2}}\partial_{{\xi}}{p}_2\notag\\
   &- \partial_{{\xi}}(\bar{q}_+ \theta_1
   -{q}_1\sin {\theta}_1 ) - \displaystyle\frac{\int_{\eta}^{\eta_0} {\psi_1'}(\tau)\dif \tau}{L -{\psi}_1(\eta)}\partial_{{\xi}} ( {q}_1 \sin {\theta}_1)-  \frac{(\xi -L){\psi_1'}}{L-{\psi_1}}\partial_{{\xi}}{p}_1,\\
  F_3\defs & \partial_{{\xi}} \left(\frac{1}{\bar{\rho}_+} p_2 +  \bar{q}_+\cdot q_2- \Big(\frac12 {q_2}^2 + i_2 - (\frac12 \bar{q}_+^2 + \bar{i}_+) \Big) \right)\notag\\
  & -\partial_{{\xi}} \left(\frac{1}{\bar{\rho}_+} p_1 +  \bar{q}_+\cdot q_1 - \Big(\frac12 {q_1}^2 + i_1 - (\frac12 \bar{q}_+^2 + \bar{i}_+) \Big) \right).
\end{align}
Moreover, one has
\begin{align}
  \Theta =& 0,&\quad &\text{on}\quad (\Gamma_{2}^* \cap\overline{\Omega_{\xi_*}})\cup
  (\Gamma_{4}^*\cap\overline{\Omega_{\xi_*}}),\\
 P =& 0 ,&\quad &\text{on}\quad\Gamma_3^*.
\end{align}
In addition, on the free boundary ${\Gamma}_s^*$, one has
    \begin{align}
  &G_i({U}_2,  U_-^{(2)}) -G_i({U}_1,  U_-^{(1)})  =0, \quad i=1,2,3, \quad &\text{on}\quad {\Gamma}_s^* \label{G123F}\\
  &G_4(U_2,  U_-^{(2)};\psi_2'({\eta})) -  G_4(U_1, U_-^{(1)};\psi_1'({\eta}))=0, \quad &\text{on}\quad {\Gamma}_s^* \label{G4F}
\end{align}
where
\begin{align}
  U_-^{(i)} \defs U_-(\xi_* -\int_{\eta}^{\eta_0}\psi_i'(\tau)\dif \tau,{\eta}), \quad i=1,2.
\end{align}

Applying \eqref{zz0} and Lemma \ref{elliptic} as well as the elliptic theory and hyperbolic theory, one can easily obtain
\begin{lem}\label{contraction}
 Under the assumptions of Theorem \ref{thm2}, there exists a sufficiently small positive constant $\sigma_2$, depending on $\overline{U}_{\pm}$, $L$ and $\epsilon$, such that for any $0< \sigma \leq \sigma_2$, if there are two solutions $(U_1,\psi_1)$ and $(U_2,\psi_2)$ to the problem {\bf {$\llbracket\textit{TSPL}\rrbracket$}}, then the following estimates hold:
  \begin{align}
    &\| U_2 - U_1\|_{({\Omega}_{\xi_*};\widetilde{P}_i)}+ \|\psi_2^{'} -\psi_1^{'}\|_{\mcc^{0,\alpha} (\Gamma_s^*)}\notag\\
  \leq& C \sigma \Big(\| U_2 - U_1\|_{({\Omega}_{\xi_*};\widetilde{P}_i)}
  + \|\psi_2^{'} -\psi_1^{'}\|_{\mcc^{0,\alpha} (\Gamma_s^*)}\Big),
  \end{align}
  where the constant $C$ depends on $\overline{U}_\pm$, $L$ and $\alpha$.
  \end{lem}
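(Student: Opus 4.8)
The plan is to treat the difference system \eqref{F1}--\eqref{F4}, together with the boundary relations \eqref{G123F}--\eqref{G4F}, as a linearization about the uniform subsonic state $\overline{U}_+$ and to show that every inhomogeneous term is \emph{quadratic}, i.e.\ carries a factor $\sigma$ times the full difference norm. The crucial preliminary observation is that, by Lemma \ref{elliptic} applied separately to $(U_1,\psi_1)$ and $(U_2,\psi_2)$, one has $\|U_j-\overline{U}_+\|_{(\Omega_{\xi_*};\widetilde P_i)}+\|\psi_j'\|_{\mcc^{0,\alpha}(\Gamma_s^*)}\le C_+\sigma$ for $j=1,2$, while by \eqref{zz0} the supersonic states obey $\|U_-^{(j)}-\overline U_-\|_{\mcc^{2,\alpha}}\le C_-\sigma$. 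Consequently, when the two copies of each nonlinear quantity are subtracted, the leading linear part is exactly what is retained on the left of \eqref{F1}--\eqref{F4}, and the remainders $F_1,F_2,F_3$ are differences of smooth functions whose arguments are all $O(\sigma)$-close to the background; a Taylor expansion then yields $|F_k|\le C\sigma\big(|P|+|\Theta|+|Q|+|S|+|\psi_2'-\psi_1'|\big)$ together with the corresponding bounds on first derivatives in the weighted norms.

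First I would derive a second-order elliptic equation for $P$ (equivalently for $\Theta$) by combining \eqref{F1}--\eqref{F2} exactly as \eqref{eq180}--\eqref{eq23} were obtained in Lemma \ref{elliptic}, the principal part being the constant-coefficient operator attached to $\overline U_+$, which is uniformly elliptic under $M_+^2\le 1-\epsilon$ and $u_+>0$. The boundary conditions transcribe directly: $\Theta=0$ on $\Gamma_2^*\cup\Gamma_4^*$, and $P=0$ on $\Gamma_3^*$, so that $\partial_{\vec\tau_3}P=0$ produces an oblique condition for $\Theta$ as in \eqref{reexit}. On $\Gamma_s^*$ the difference of the R-H relations \eqref{G123F} is linearized to an oblique-derivative condition $\mathcal A_s\partial_{\vec n_s}\Theta+\mathcal B_s\partial_{\vec\tau_s}\Theta=f_s^{\mathrm{diff}}$, whose right-hand side $f_s^{\mathrm{diff}}$ again contains only $O(\sigma)$-weighted differences of the states together with the contribution of $U_-^{(2)}-U_-^{(1)}=\partial_\xi U_-\cdot(\psi_2-\psi_1)+\cdots=O(\sigma)\|\psi_2'-\psi_1'\|_{\mcc^{0,\alpha}}$. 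The obliqueness constant $\mathcal A_s$ remains bounded below by the same $\varepsilon_1>0$ of \eqref{As}, since $\partial_pH_1\ge\varepsilon_0$ of \eqref{varepsilon0eq} persists under $p>\bar p_*+\epsilon$.

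Next I would rerun Steps 3--6 of Lemma \ref{elliptic} verbatim on the difference unknowns: the corner barriers near $\widetilde P_1,\widetilde P_4$ and the exponential barrier on the strip $\Omega_Q(\iota)$ give $\sup_{\overline\Omega_{\xi_*}}|\Theta|\le C\sigma(\cdots)$; the weighted Schauder estimate (Theorem~8.29 in \cite{GI2011}) upgrades this to a bound on $\|P\|_{1,\alpha}^{(-\alpha;\{\widetilde P_i\})}+\|\Theta\|_{1,\alpha}^{(-\alpha;\{\widetilde P_i\})}$; the transport equations \eqref{F3}--\eqref{F4} recover $Q$ and $S$ by integration along the $\xi$-characteristics using the R-H data on $\Gamma_s^*$; and the difference of \eqref{G4F} expresses $\psi_2'-\psi_1'$ through $[v]$-type differences, giving $\|\psi_2'-\psi_1'\|_{\mcc^{0,\alpha}(\Gamma_s^*)}\le C\sigma(\cdots)$. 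Collecting the six pieces, every term on the right is $C\sigma$ times $\|U_2-U_1\|_{(\Omega_{\xi_*};\widetilde P_i)}+\|\psi_2'-\psi_1'\|_{\mcc^{0,\alpha}(\Gamma_s^*)}$, which is precisely the asserted contraction inequality.

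The main obstacle I anticipate is the bookkeeping in the mapping-induced terms of $F_1,F_2$ and on the shock front: one must check that products such as $\frac{\int_\eta^{\eta_0}\psi_j'}{L-\psi_j}\partial_\xi(\cdots)$ and $\frac{(\xi-L)\psi_j'}{L-\psi_j}\partial_\xi(\cdots)$, once differenced across $j=1,2$, genuinely factor through either $\psi_2'-\psi_1'$ or a difference of state variables, each carrying an $O(\sigma)$ coefficient. This uses essentially that $\psi_j'=O(\sigma)$ and that the $\xi$-derivatives of the states are $O(\sigma)$ in the weighted norm, so that no term of size merely $O(1)$ times a difference survives. Once this quadratic structure is confirmed uniformly up to the corners $\widetilde P_i$, the elliptic and hyperbolic estimates close exactly as in Lemma \ref{elliptic}.
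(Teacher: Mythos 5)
Your proposal is correct and follows essentially the same route the paper intends: the paper's own ``proof'' of Lemma \ref{contraction} is a one-line appeal to \eqref{zz0}, Lemma \ref{elliptic}, and standard elliptic/hyperbolic theory applied to the difference system \eqref{F1}--\eqref{F4} with \eqref{G123F}--\eqref{G4F}, and your write-up simply supplies the details of that argument (quadratic structure of $F_1,F_2,F_3$ and of the shock-front data, then a rerun of Steps 3--6 of Lemma \ref{elliptic} for the differences). Your implicit use of $\psi_2-\psi_1=O(\|\psi_2'-\psi_1'\|)$ is justified because both fronts are anchored at the common point $\xi_*$ from Lemma \ref{3.4}, exactly as in \eqref{plm}.
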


Hence Theorem \ref{thm2} can be proven by lemma \ref{contraction} easily if $\sigma$ is sufficiently small, \emph{i.e.}, when $\sigma$ is sufficiently small, it follows from Lemma \ref{contraction} that $\| U_2 - U_1\|_{({\Omega}_{\xi_*};\widetilde{P}_i)}+ \|\psi_2^{'} -\psi_1^{'}\|_{\mcc^{0,\alpha} (\Gamma_s^*)}=0$. Then solution $(U; \psi)$ is unique.
%

\section*{Acknowlegements}
The research of Beixiang Fang was supported in part by Natural Science
Foundation of China under Grant Nos. 12331008 and 11971308, and the Fundamental Research Funds for the Central Universities. The research of Xin Gao was supported in part by SJTU Overseas Study Grants for Excellent PhD Students (No.WF610560514).
The research of Wei Xiang was supported in part by the
Research Grants Council of the HKSAR, China (Project No. CityU 11304820, CityU 11300021, CityU 11311722, and CityU 11305523) and in part by the Research Center for Nonlinear Analysis of the Hong Kong Polytechnic University.

%

\end{document}